\newtheorem{theorem}{Theorem}
\newtheorem{lemma}{Lemma}
\newtheorem{cor}{Corollary}
\newtheorem{proposition}{Proposition}
\newtheorem{remark}{Remark}
\title{$N$-body choreographies on a $p$-lima\c{c}on curve}
\begin{document}
\maketitle
	\markboth{M. Fernandez-Guasti, T.  Fujiwara, E.  P\'{e}rez-Chavela, and  S. Q.  Zhu}{Choreographies on $p$-lima\c{c}ons}
\author{\begin{center}
		{ Manuel Fernandez-Guasti$^1$, Toshiaki Fujiwara$^2$, Ernesto P\'{e}rez-Chavela$^3$, Shuqiang Zhu$^{4}$}\\	
		\bigskip
		$^1$ Lab. de Óptica Cuántica, Depto. de Física, Universidad \\
		Autónoma Metropolitana, mfg@xanum.uam.mx\\
		$^2$ College of Liberal Arts and Sciences, Kitasato University, 1-15-1 Kitasato, Sagamihara, Kanagawa 252-0329, Japan,
		fujiwara@kitasato-u.ac.jp\\
		$^3$Department of Mathematics, Instituto Tecnol\'ogico Aut\'onomo\\ de México (ITAM), ernesto.perez@itam.mx\\
		$^4$School of  Mathematics,  Southwestern University of Finance and Economics, Chengdu, China, zhusq@swufe.edu.cn\\
	\end{center}

\begin{abstract}
	We consider  an $N$--body problem under a harmonic potential of the form  $\frac{1}{2}\sum \kappa_{jl} |q_j-q_l|^2$. 
A $p$-lima\c{c}on curve 
is a planar curve parametrized by $t$ given by $a(\cos t,\sin t)+b(\cos pt, \sin pt)$, where
$a,b\in \mathbb{R}$, $p \in \mathbb{Z}$,
and $t \in [0,2\pi]$.  We study $N$-body choreographic motions constrained to a $p$-limaçon curve and establish necessary and sufficient conditions for their existence. Specifically, we prove that choreographic motions exist if and only if $p/N, (p \pm 1)/N \notin \mathbb{Z}$. 
Under an additional symmetry assumption on the force coefficients, we further refine these conditions. 
We also analyze the occurrence of collisions, showing that for given $p$ and $N$, at most $2(N-1)$ choices of $a/b$ lead to collisions. Furthermore, we find additional conserved quantities. 
\end{abstract}

\textbf{Keywords:}   $N$--body problem, choreography, harmonic potential. 

%\tableofcontents

\section{Introduction}
A choreography is a periodic solution of the $N$--body problem, where the particles move on the same curve with the same time 
spacing between any two consecutive particles. The first choreography was found by J.L. Lagrange in 1772 for the Newtonian three-body problem with equal masses \cite{Lagrange}, on it, the masses are located at the vertices of an equilateral triangle. In 1993, also in the Newtonian three-body 
problem, C. Moore found numerically the well-known figure eight choreography \cite{Moore}. Some years later, in 2000, Chenciner and Montgomery, by using variational methods, gave an analytical proof of the existence of this choreography \cite{Chenciner}. In 2002, Chenciner, Gerver, Montgomery and Sim\'o, proved that for the strong force 
potential, for any $N \geq 3$, there is a choreography solution for the equal mass case, different from the circular one \cite{Chenciner2}. Since then, many works have been published showing beautiful  new choreographies for the $N$--equal masses problem,  cf. \cite{BarutelloTerracini, BarutelloFerrarioTerracini, Chen, FerrarioTerracini, Ferrario, FerrarioPortaluri, Shibayama, TerraciniVenturelli, WangZhang, Yu2016} and references therein.  

Although the existence of such choreographies is shown, 
very few  analytical choreographic solutions are presently known. In  the Newtonian case, the only analytical choreographies are  $N$ equal masses rotating in a regular N-gon in a plane within a circumscribed circle.

In 2003, Fujiwara T.,  Fukuda H., Ozaki H.,   studied analytical choreographies on the lemniscate curve for the $3$--equal masses problem, when the particles move under the action of an inhomogeneous potential \cite{Fujiwara0}. 
Later, in 2019,
Vieyra \cite{Vieyra}, and Turbiner \& Vieyra \cite{TurbinerVieyra}
extended the result to $N$-body choreographies.
The main idea to extend the three-body problem to  the $N$-body problem
is to keep  the interactions between particles with consecutive indices 
$V_{j,j\pm 1}$ and set the other potentials vanish.
Then, one particle interacts only  with
 the consecutive indexed 
two particles.
Namely, the equation of motion for this particle
is the same as for the three-body problem.
Similarly, keep 
 the interactions between the particles with next-consecutive index 
$V_{j,j\pm 2}$ and set the other potentials vanish and so on.

The lesson to be learned from this is that, if the symmetry imposed on the system is relaxed from 
$S_N$ (the permutation group of $N$ particles,
that produce the ``universal gravity'')
to $Z_N$ (cyclic group generated by cyclic rotation of $j \to j+1$),
the system's possibilities increase and the equations of motion can have a greater variety of solutions.

Recently, Fernandez-Guasti \cite{Fernandez}
found another  analytical choreography,  the four-body choreography
on a special lima\c{c}on of Pascal,
\begin{equation*}
q(t)=(x,y)=2^{-1}(\cos t, \sin t) + 2^{-1}(\cos(2 t),\sin(2 t)).
\end{equation*}
In the same paper, he  also found several constants of motion other than the well-known ones for this motion  \cite{Fernandez}. 
He imposed a $Z_4$ symmetry to the system.
Then, the interaction terms are grouped by
 the consecutive
$V_{j,j\pm 1}$,
and 
 the next consecutive 
$V_{j, j\pm 2}$.
More precisely, he assumed the potential energy $V$ as
\[
V=\sum_{j<l} \frac{\kappa_{jl}}{2}|q_j-q_l|^2,\quad \kappa_{jl}=\kappa_{lj},
\]
with
$\kappa_{01}=\kappa_{12}=\kappa_{23}=\kappa_{30}=1$ (attractive),
and $\kappa_{02}=\kappa_{13}=-1/2$ (repulsive).

Inspired by his work,
we  study $N$-body equal mass choreographies on a 
$p$-lima\c{c}on curve in $\mathbb{R}^2$ given by 
\begin{equation}\label{eq:limacon}
q(t)=(x,y)=a(\cos t, \sin t) + b(\cos(pt),\sin(pt)),
\end{equation}
with $a,b \in \mathbb{R}$ and $p \in \mathbb{Z}\setminus \{1,0,-1\}$.
The curve for 
$p=2$ is is called lima\c{c}on, and for $p=-2$ is called trefoil. In general the curves for $p>0$, are called \emph{$p$-lima\c{c}on } and for $p<0$ are called  \emph{$(|p|+1)$-folium} (see \cite{Lawrence, Dana}). 
%{\color{red}$\leftarrow$ TF Question:
%Is this correct?
%I'm not sure but isn't this $(|p|+1)$-folium?
%I guess ``trifolium'' for $p=-2$, not for $p=-3$.}
Since our analysis is valid for all $p \in \mathbb{Z} \setminus \{1,0,-1\}$, we decided to call both curves as \emph{$p$-lima\c{c}on}.

We assume that the masses are moving under the influence of the harmonic potential 
\begin{equation}\label{eq:potential}
V=\sum_{j<l} \frac{\kappa_{jl}}{2}|q_j-q_l|^2, \quad \kappa_{jl}=\kappa_{lj},
\end{equation}
where the force coefficients $\kappa_{jl}$  are  to be determined.

If we impose the $S_N$ invariance,
the permutation group of $N$-bodies,
all  the force coefficients $\kappa_{jl}$ must be equal.
But, 
by the fact that 
\[
\sum_{j<k} |q_j-q_k|^2
=N\sum_{k} |q_k|^2\  \mbox{ if }\ \sum_k q_k=0,
\]
we observe that the above potential expresses the 
assembly of  $N$ free  harmonic oscillators.
Then, the solution orbits under this potential are 
a
set of $N$ ovals.

On the other hand, if we relax the symmetry to $Z_N$,
the symmetric rotation group generated by $j\to j+1$, 
the force coefficients $\kappa_{jl}$
can be separated  to some independent groups,
$\kappa_1$ for
 the consecutive indexed interaction, 
i.e., $\kappa_1=\kappa_{0,1}=\dots, =\kappa_{N-1,0}$, 
$\kappa_2$ for
 the next-consecutive indexed interaction, 
i.e., $\kappa_2=\kappa_{0,2}=\dots, =\kappa_{N-2,0}=\kappa_{N-1,1}$, and so on.  
Then, the equation of motion will have rich solutions, 
as shown by Fernandez-Guasti \cite{Fernandez} in the case of $N=4$.
However, 
if $N=3$, $Z_N$ invariance demands that all $\kappa_{jl}$ must be equal.
That means free harmonic oscillators.
So, we consider $N\ge 4$. 

\emph{The main problem of this work is to determine, under the above setting,   the values of
	\[
	p, \ N, \ a, \ b, \ \kappa_1, \dots, \ \kappa_{\lfloor \frac{N}{2}\rfloor},
	\]  
	that ensure an $N$--body choreography on a \( p \)-limaçon curve.}

Assume that $p\ne 0$ and $p\ne \pm1$.    We establish a fundamental condition on \( p \) and \( N \) (Theorem \ref{thm1}), showing that choreographies exist if and only if  
\[
p/N, (p \pm 1)/N \notin \mathbb{Z}.   \ %{\color{gray} \mbox{ (except }p=0,\pm 1).}
\]  
When this condition is satisfied, we can determine the force coefficients \( \kappa_1, \dots, \kappa_{\lfloor \frac{N}{2}\rfloor} \). This condition is further refined under the following additional symmetry assumption 
(Theorem \ref{thm2})
\[ \kappa_1 = \kappa_3 = \kappa_5 = \dots, \quad   \kappa_2 = \kappa_4 = \kappa_6 = \dots.   \]

In some of the choreographic motions studied, collisions between the bodies may occur. We  show that for a given \( p \) and \( N \), at most \( 2(N-1) \) values of \( a/b \) lead to collisions. Additionally, we identify further conserved quantities beyond the standard center of mass, angular momentum, and energy.

After the introduction the paper is organized as follows: In 
Section \ref{Sec:Lagrangian} we give the Lagrangian and the respective equations of motion. Section \ref{Sec: Nece} examines the center of mass, angular momentum, moment of inertia, and kinetic energy, leading to necessary conditions on \( p \) and \( N \). Section \ref{Sec:Main-1} presents our first main result, Theorem \ref{thm1}. Section \ref{Sec:Main-2} establishes our second main result, Theorem \ref{thm2}.  Collisions are analyzed in Section \ref{Sec:Collision}. Finally, in Section \ref{Sec:Constants}, we discuss additional conserved quantities.

\section{Basic settings}\label{Sec:Lagrangian}
%\section{$N$-body choreographic motion on a $p$--lima\c{c}on}%\label{Sec:Setting}
Consider the $p$-lima\c{c}on curve in $\mathbb{R}^2$ parametrized by $t$
%curve on $q=(x,y) \in \mathbb{R}^2$,
given by \eqref{eq:limacon}
%\begin{equation}
%q(t)=(x,y)=a(\cos t, \sin t) + b(\cos(pt),\sin(pt)),
%\end{equation}
with $a,b \in \mathbb{R}$ and 
$p \in \mathbb{Z}\setminus \{0, \pm 1\}$ (See Figure \ref{fig:limacon}). 
We assume $ab\ne 0$, since if either 
 $a$ or $b$ vanish,  the motion reduces to a circular trajectory, 
which is not interesting.
We also exclude $p=0, \pm 1$,
as these cases correspond to a circle or an oval rather than a lima\c{c}on curve.

\begin{figure}[h]
		\hspace{0.2cm} 
\begin{tikzpicture}
\begin{axis}[width=2.6cm, height=2.6cm,
scale only axis,
xmin=-2.3, xmax=2.3,
ymin=-2.3, ymax=2.3,
xlabel={$x$}, ylabel={$y$},
samples=200,
domain=0:2*pi,
xtick=\empty, ytick=\empty,
enlargelimits=false,
axis lines=middle
]
\addplot[purple, thick, samples=300, domain=0:2*pi, smooth] (
{cos(deg(x)) + 0.8*cos(2*deg(x))},
{sin(deg(x)) + 0.8*sin(2*deg(x))});
\end{axis}
\end{tikzpicture}
\hspace{0.1cm} % Space between the two plots
\begin{tikzpicture}
\begin{axis}[width=2.6cm, height=2.6cm,
scale only axis,
xmin=-2.3, xmax=2.3,
ymin=-2.3, ymax=2.3,
xlabel={$x$}, ylabel={$y$},
samples=200,
domain=0:2*pi,
xtick=\empty, ytick=\empty,
enlargelimits=false,
axis lines=middle
]
\addplot[purple, thick, samples=300, domain=0:2*pi, smooth] (
{cos(deg(x)) + 0.8* cos(-2*deg(x))},
{sin(deg(x)) + 0.8* sin(-2*deg(x))}
);
\end{axis}
\end{tikzpicture}
\hspace{0.1cm} % Space between the two plots
\begin{tikzpicture}
\begin{axis}[width=2.6cm, height=2.6cm,
scale only axis,
xmin=-2.3, xmax=2.3,
ymin=-2.3, ymax=2.3,
xlabel={$x$}, ylabel={$y$},
samples=200,
domain=0:2*pi,
xtick=\empty, ytick=\empty,
enlargelimits=false,
axis lines=middle
]
\addplot[purple, thick, samples=300, domain=0:2*pi, smooth] (
{cos(deg(x)) +0.8* cos(3*deg(x))},
{sin(deg(x)) + 0.8*sin(3*deg(x))}
);
\end{axis}
\end{tikzpicture}
\hspace{0.1cm} % Space between the two plots
\begin{tikzpicture}
\begin{axis}[width=2.6cm, height=2.6cm,
scale only axis,
xmin=-2.3, xmax=2.3,
ymin=-2.3, ymax=2.3,
xlabel={$x$}, ylabel={$y$},
samples=200,
domain=0:2*pi,
xtick=\empty, ytick=\empty,
enlargelimits=false,
axis lines=middle
]
\addplot[purple, thick, samples=300, domain=0:2*pi, smooth] (
{cos(deg(x)) + 0.8*cos(-3*deg(x))},
{sin(deg(x)) + 0.8*sin(-3*deg(x))}
);
\end{axis}
\end{tikzpicture}
\caption{Four p-lima\c{c}on curves for $a=1.2, b=1$. From left to right, the values of p is $p=2$,  $p=-2$, $p=3$,  and $p=-3$.  
 }
	\label{fig:limacon}
\end{figure}
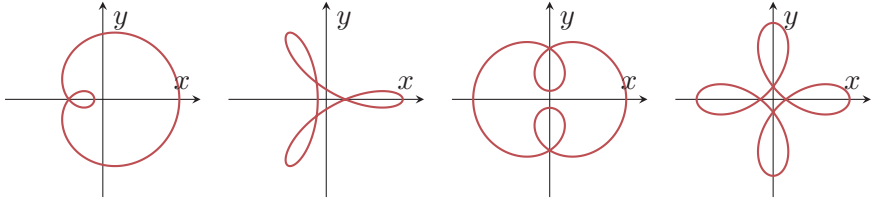

We will use  the following complex notation,
\begin{equation}\notag
z=x+iy=a e^{it} + b e^{ipt}, \quad 
  \quad 
z^\dag=x-iy=a e^{-it} + b e^{-ipt}.
\end{equation}

Now, we consider the $N$-body  choreographic motion on the $p$-lima\c{c}on curve for $N\ge 4$. 
\begin{equation}\label{choreograph1}
q_k(t)=q(t+ k \frac{2\pi}{N}), \quad k=0,1,2,\dots, N-1.
\end{equation}
Namely,
\begin{equation}\notag
q_k(t)=a\left(\cos\left(t+\frac{2k\pi}{N}\right), \sin\left(t+\frac{2k\pi}{N}\right)\right)
	+ b\left(\cos\left(pt+\frac{2kp\pi}{N}\right), \sin\left(pt+\frac{2kp\pi}{N}\right)\right).
\end{equation}

Or, using the complex variables, %$z=x+iy, z^\dag=x-iy \in \mathbb{C}$,
\begin{equation}\notag
\begin{split}
z_k(t)&=ae^{it+i2k\pi/N}+be^{ipt+i2kp\pi/N}=ae^{it}\omega^k+be^{ipt}\omega^{pk},\\
\end{split}
\end{equation}
where
\begin{equation}\notag
\omega=e^{i2\pi/N}\ne 1 \quad \mbox{ and } \quad \omega^N=1.
\end{equation}

Consider the following Lagrangian for $N$ masses with $m_k = 1$ for all $k$, 
\begin{equation}\label{Lagrangian}
\begin{split}
L&=K-V,\\
K&=\frac{1}{2}\sum_k |\dot{q}_k|^2,\quad
V=\frac{1}{2}\sum_{j<l} \kappa_{jl}|q_j-q_l|^2,\quad \kappa_{jl}=\kappa_{lj},
\end{split}
\end{equation}
where $K$ is the kinetic energy, $V$ is the potential energy, and $\kappa_{jl}$ are the force coefficients.

As stated in the introduction,
we impose the $Z_N$ symmetry to this potential.
%Namely, the nearest neighbour (in the index) interaction terms has the same $\kappa_1$, the next nearest neighbour has another same $\kappa_2$, and so on.
That is,  the consecutive indexed interaction terms have the same $\kappa_1$,
the next consecutive indexed terms have another equal $\kappa_2$, and so on.
Namely,
$\kappa_1=\kappa_{0,1}=\dots=\kappa_{j,j+1}=\dots=\kappa_{N-1,0}$;
$\kappa_2=\kappa_{0,2}=\dots=\kappa_{j,j+2}=\dots=\kappa_{N-2,0}=\kappa_{N-1,1}$,
and
\[
\kappa_{\lfloor N/2\rfloor}=
\begin{cases}
\kappa_{0,N/2}=\kappa_{1,N/2+1}=\dots=\kappa_{N/2-1,N-1}&\mbox{ for even }N\\
\kappa_{0,(N-1)/2}=\kappa_{1,(N+1)/2}=\dots=\kappa_{N-1,(N-3)/2}&\mbox{ for odd }N.
\end{cases}
\]
See Figure \ref{figKappas}.
 For $\kappa_{\lfloor N/2\rfloor}$,
there are $N/2$ bonds for even $N$,
and $N$ bonds for odd $N$.
\begin{figure}[h!] % the pictures are updated 2025/04/22
   \centering
   \includegraphics[width=4cm]{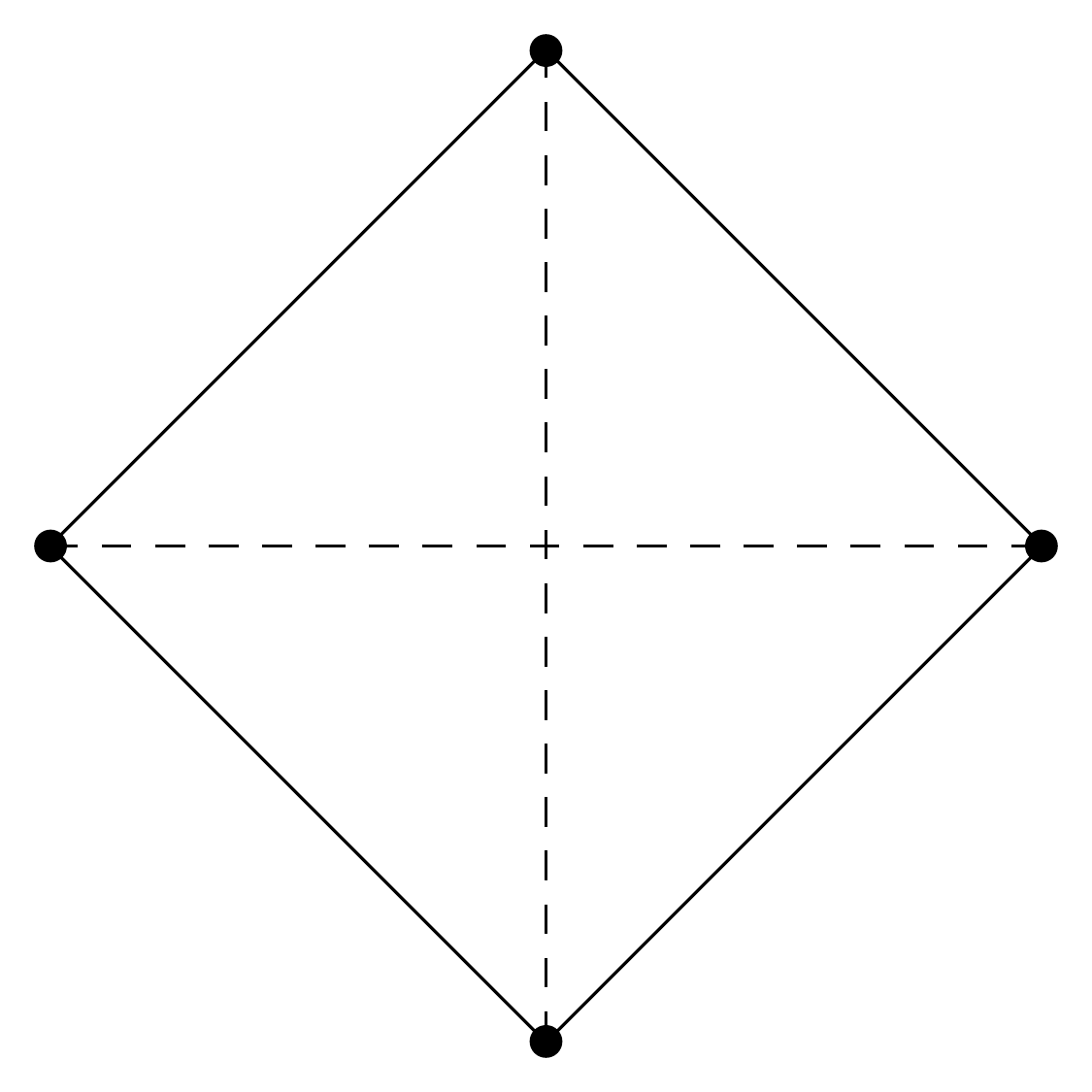}
   \includegraphics[width=4cm]{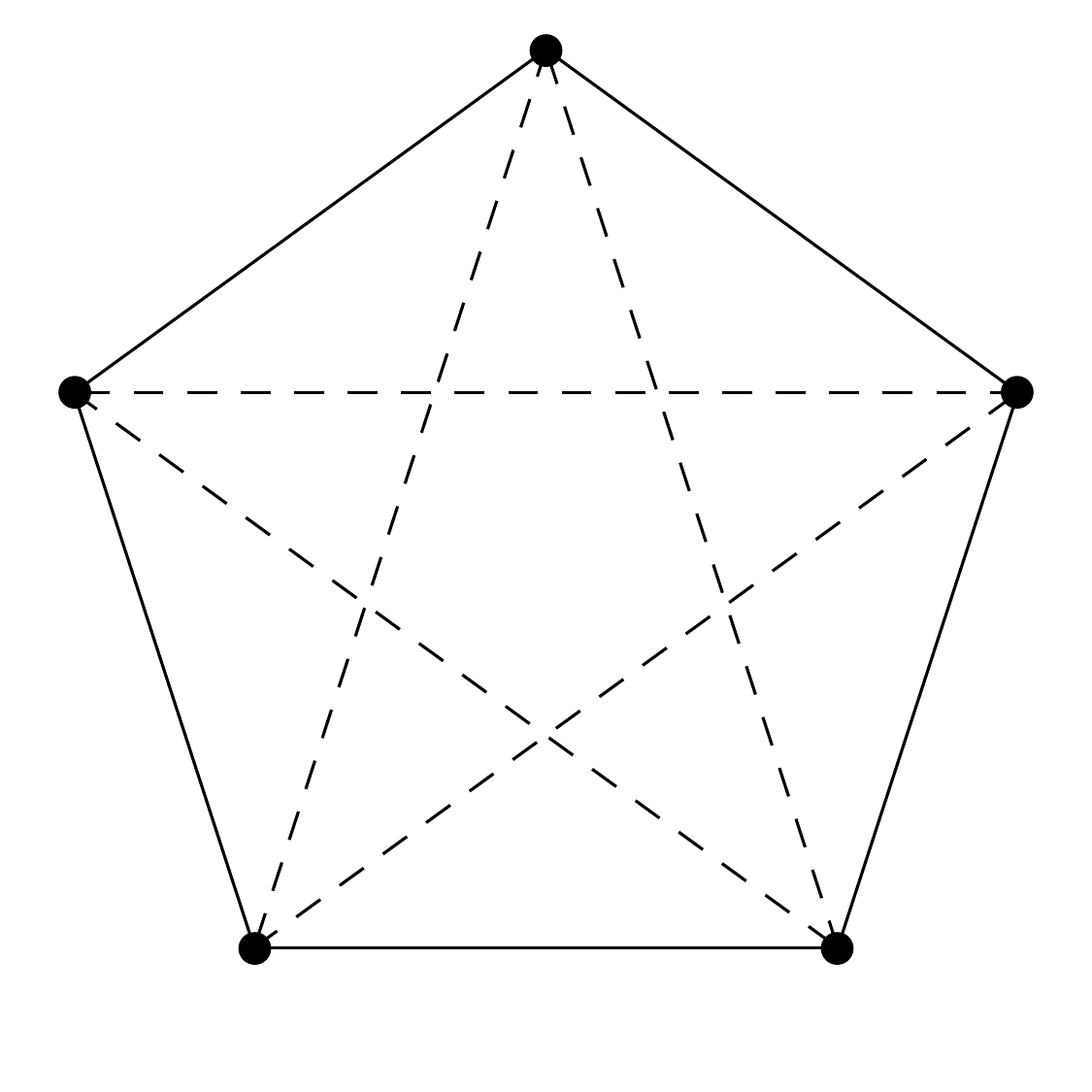}
   \includegraphics[width=4cm]{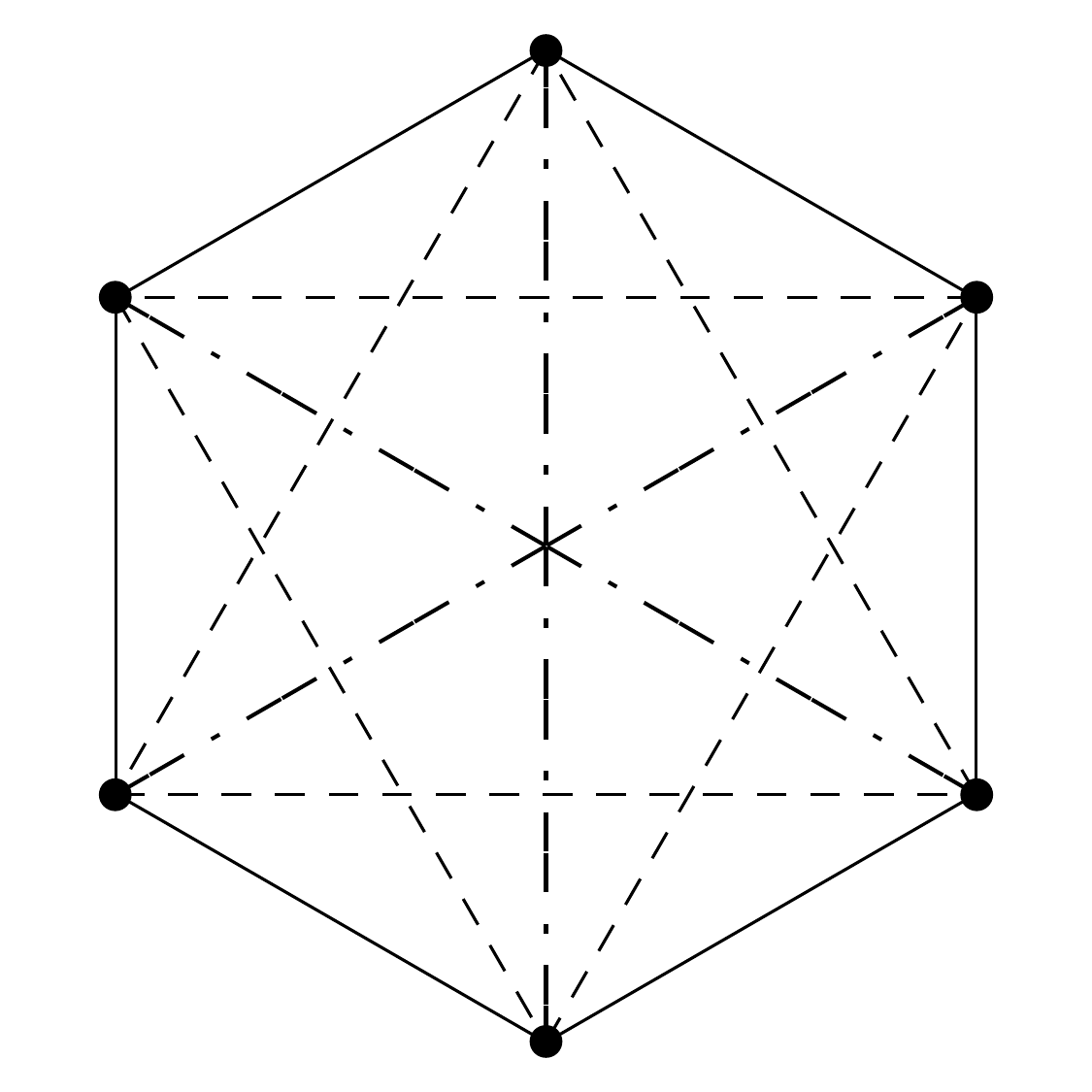}\\
   \includegraphics[width=4cm]{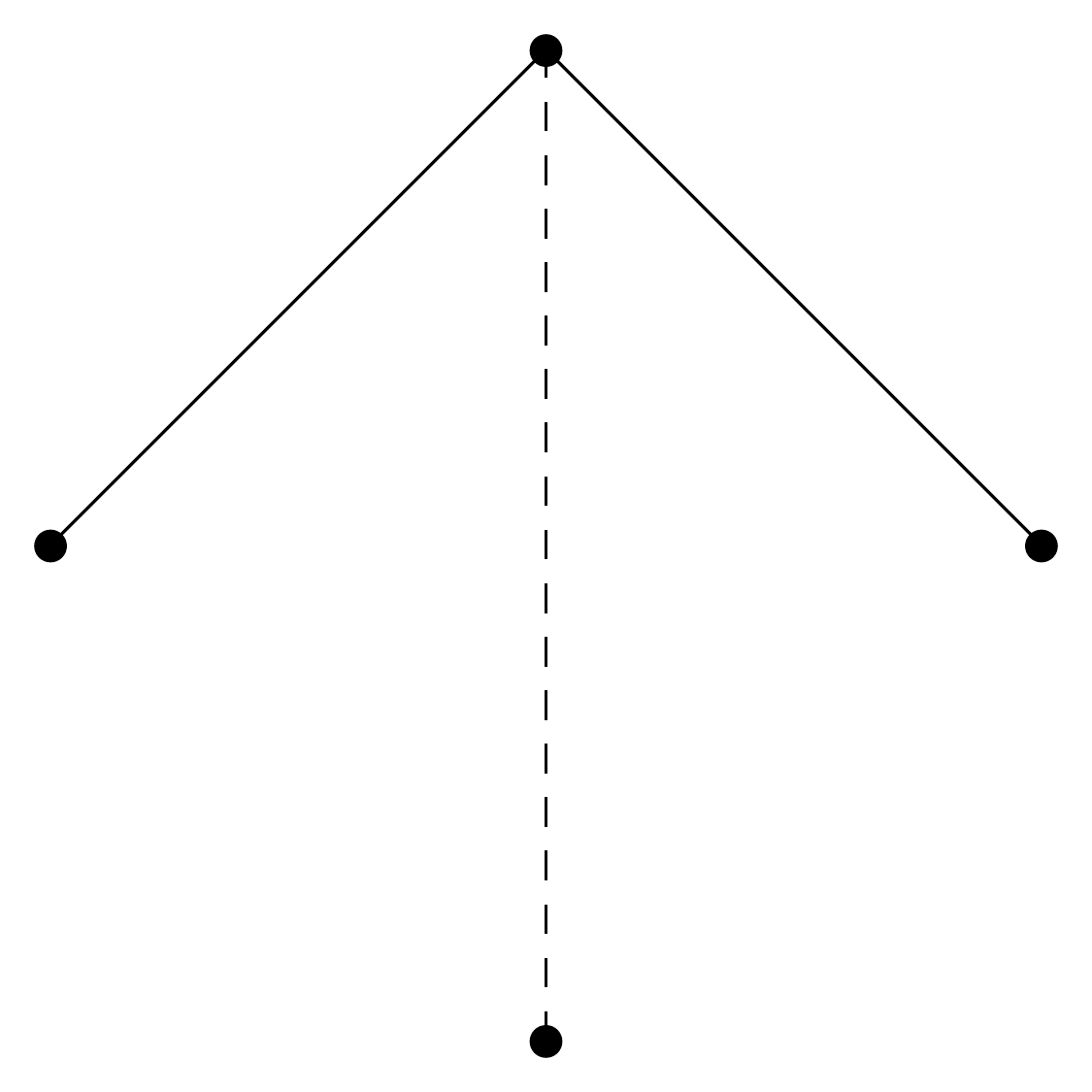}
   \includegraphics[width=4cm]{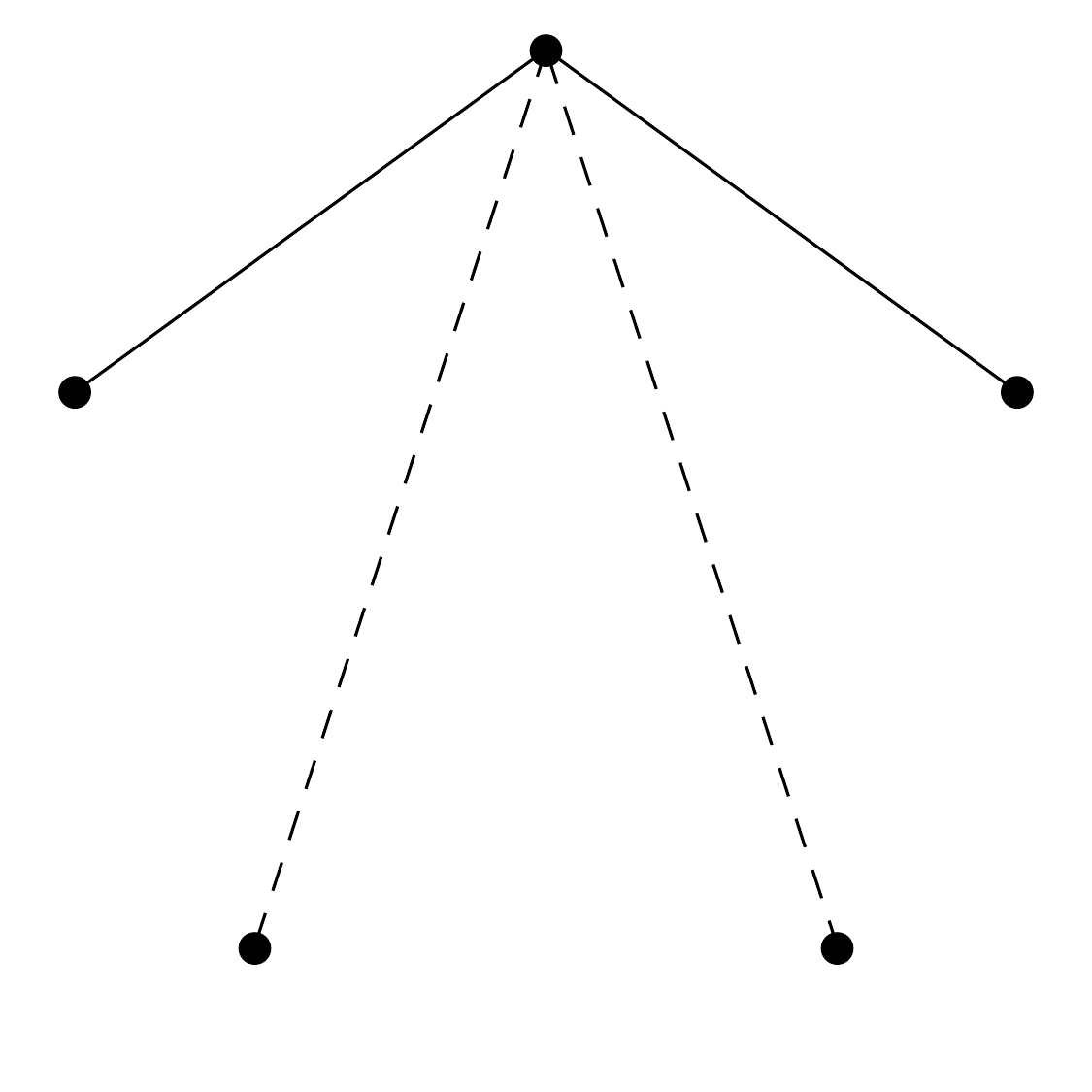}
   \includegraphics[width=4cm]{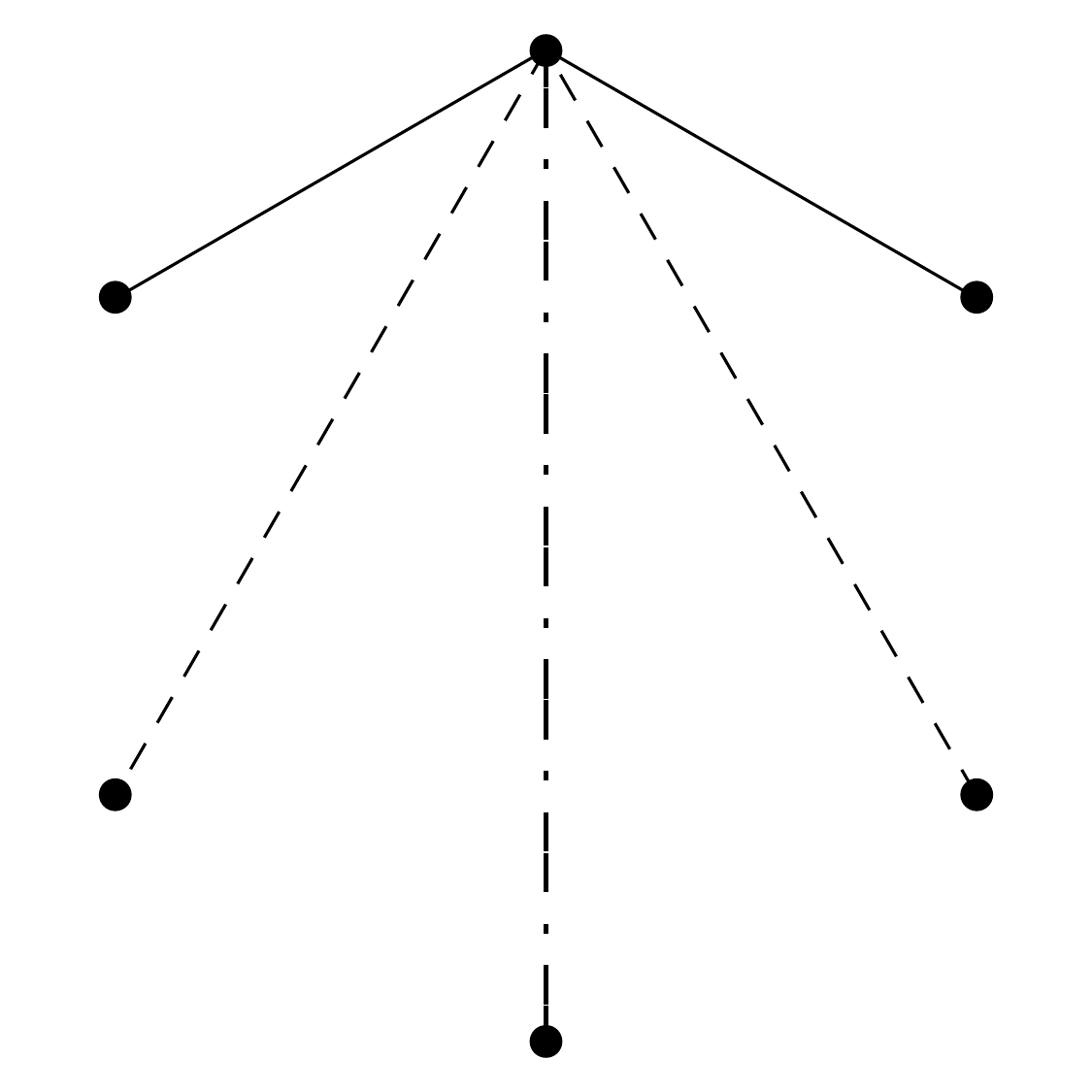}
   \caption{
   Upper: Interactions between $q_j$ and $q_k$.
   Lower: Interactions between $q_0$ (the top point) and $q_j$.
   From left to right, $N=4,5,6$.
   The solid, dashed, and dot-dashed  lines represents $\kappa_1, \kappa_2$, and $\kappa_3$ respectively.
   }
   \label{figKappas}
\end{figure}

The equations of motion are
\begin{equation}\label{eqOfMotion1}
\ddot{q}_j 
= -\frac{\partial V}{\partial q_j}
=\sum_{l\ne j}\kappa_{jl}(q_l-q_j)
\mbox{ for } j=0,1,2,\dots,N-1.
\end{equation}
Since we are considering choreographic motions,
it is sufficient to consider 
the following single equation
\begin{equation}\label{eqOfMotion2}
\ddot{q}_0 
=\sum_{j\ne 0}\kappa_{0j}(q_j-q_0).
\end{equation}

\section{Necessary conditions for $p$ and $N$, and constants of motion}\label{Sec: Nece}
In this section, we derive two necessary conditions on $p$ and $N$ for the $N$-body choreographic motion \eqref{choreograph1}  to satisfy  the equations of motion. Our system is clearly invariant under 
translation and  rotation, which implies that 
any solution must have a constant center of mass and a  constant angular momentum.  To analyze the problem from a broader perspective, we consider a general class of potentials of the form
\[
V=V\left(|q_0-q_1|^2,\dots,|q_i-q_j|^2,\dots\right).
\]
 Recall that we have excluded that $p= 0, \pm 1$.

\subsection{Necessary condition to have constant center of mass}
The center of mass is the first moment of mass normalized by total mass: $\frac{\sum m_i q_i}{\sum m_i}$, and it is constant if and only if the first moment of mass  $g(t)$ is constant, where 
\begin{equation*}
g(t)=\sum_{k}m_k q_k = ae^{it}\sum_{k=0,1,2,\dots,N-1}\omega^k+be^{ipt}\sum_{k=0,1,2,\dots,N-1}\omega^{kp}. 
\end{equation*}

%\textcolor{magenta}{Here the sum of the masses in the denominator is missing. Is this the definition that we will use?. We must clarify this point. We can use Simon's suggestion or say that we have normalized the total mass equal to 1.}

If $p/N \notin \mathbb{Z}$,
then $\omega^p\ne 1$,
and
\begin{equation*}
g(t)
=ae^{it}\,\frac{\omega^N -1}{\omega-1}+be^{ipt}\,\frac{\omega^{pN}-1}{\omega^p-1}
=0.
\end{equation*}
%{\color{red}(TF: add) Namely, the center of mass stays at the origin.}
Otherwise, $\omega^p=1$ and
\begin{equation*}
g(t)=bN e^{ipt} \ne \mbox{ constant},
\end{equation*}
 since $p=0$ has been excluded. 
Hence,  when $p\ne 0$,  the  center of mass is constant if and only if 
\[ p/N \notin \mathbb{Z}.\]
This is the \emph{first necessary condition on $p$ and $N$}. If it is satisfied,   
then the center of mass is at the origin.

\subsection{Necessary condition to have constant angular momentum,
and its consequences}
Now let us study  the condition for  constant  angular momentum.
 
Consider
\begin{equation} \notag
\begin{split}
\mathcal{C}
&=\sum_k z_k^\dag \dot{z}_k
=\sum_k (x_k \dot{x}_k+y_k \dot{y}_k)
	+i\sum_k(x_k \dot{y}_k-y_k\dot{x}_k)\\
&=2^{-1}\dot{I}+ic,
\end{split}
\end{equation}
where
\begin{equation} \notag
I=\sum_k (x_k^2+y_k^2),
\end{equation}
is the moment of inertia, and
\begin{equation} \notag
c=\sum_k (x_k \dot{y}_k-y_k\dot{x}_k),
\end{equation}
is the angular momentum.
By a simple calculation, we get
\begin{equation}\notag
\mathcal{C}
=i (a^2+p b^2)N
	+abi\sum_k \left(pe^{i(p-1)t}\omega^{(p-1)k}+e^{-i(p-1)t}\omega^{-(p-1)k}\right).
\end{equation}
Therefore, if $\omega^{p-1}\ne 1$ then $\mathcal{C}=i (a^2+p b^2)N$. 
Otherwise
$$\mathcal{C}=i (a^2+p b^2)N+i abN(p e^{i(p-1)t}+e^{-i(p-1)t}).$$

Note that the angular momentum $c$ is the imaginary part of $\mathcal{C}$.
Therefore, for the case $\omega^{p-1}= 1$, the angular momentum is equal to
\[(a^2+p b^2)N+abN(p+1)\cos\Big((p-1)t\Big)\ne \mbox{ constant,}\]
 since $p=\pm 1$ has been excluded.  
Therefore,  when $p\ne \pm 1$, 
the angular momentum is constant if and only if $\omega^{p-1}\ne 1$,
which is equivalent to 
\[ (p-1)/N \notin \mathbb{Z}.  \]
This is the \emph{second necessary condition on $p$ and $N$}. 

In this case,
we get $c=(a^2+p b^2)N$ and $I=$ constant.
By a simple calculation yields $I=\sum_k z^\dag_k z_k =(a^2+b^2)N$.

 %{\color{purple}SZ:  I agree with the following changes, to the end of this section.}
 
%{\color{red}TF: The remark for Saari's conjecture is moved to the section \ref{remarksVandSaari}.
%Because the existence of the potential to support this motion is not proved at this stage.}
%{\color{gray}%\begin{remark}
%	In the Newtonian $N$-body problem, Saari conjectured that relative equilibria are the only solutions where the moment of inertia with respect to the center of mass remains constant \cite{Saari1970}.  
%	For the $N$-body problem \eqref{eqOfMotion1}, we will establish the existence of choreographies on $p$-lima\c{c}ons curves. For each of such motion, the moment of inertia remains constant, providing counterexamples to Saari’s conjecture in the context of  the $N$-body problem \eqref{eqOfMotion1}.
%\end{remark}
%}

Similarly, the kinetic energy is also constant,
\begin{equation*}
\begin{split}
2K
&=\sum_k \dot{z}^\dag_k \dot{z}_k
=(a^2+p^2b^2)N+\sum_k \left(e^{i(p-1)t}\omega^{(p-1)k}+e^{-i(p-1)t}\omega^{-(p-1)k}\right)\\
&=(a^2+p^2b^2)N.
\end{split}
\end{equation*}
Since the total energy is constant, the potential energy $V$ must be constant.

%{\color{red}TF: This subsection $\downarrow$ is moved to the section \ref{remarksVandSaari}.
%Because this subsection says that ``if such potential exists, then ....''
%So, this subsection can be placed after the existence of $V$ is proved.}
%{\color{gray}
%\subsection{A potential-related constant}
%As shown above,
%the conservation of the angular momentum of the $N$-body motion on the $p$-lima\c{c}on curve yields the necessary condition $(p-1)/N \notin \mathbb{Z}$. Then, the moment of inertia, the kinetic and the potential energy must be constant.

% The constant moment of inertia yields another possible related constant of motion.  By the well-known Lagrange-Jacobi identity
% \begin{equation*}
% 0
% =\ddot{I}
% =2\sum_k |\dot{q}_k|^2+2\sum_k q_k \cdot\ddot{q}_k
% =4K-2\sum_k q_k\cdot \frac{\partial V}{\partial q_k},
%=4(K-V).
% \end{equation*}
% we obtain
% \[
% \sum_k q_k \frac{\partial V}{\partial q_k}=2K=\mbox{ constant}.
% \]
% For a homogeneous potential, the quantity is proportional to $V$, so we have nothing new.
% Otherwise,  we get a new constant quantity.

%  \begin{remark}
%  For the harmonic potential $V$ in (\ref{Lagrangian}),
%  $\sum_k q_k \cdot \partial V/\partial q_k=2V$.
%  Therefore,
%  we conclude that 
%  \[
%  K=V=2^{-1}(a^2+p^2b^2)N.
%  \]
%  \end{remark}
%   }

\subsection{Necessary conditions to satisfy the equations of motion}
Before closing this section,
we summarize the necessary conditions.

\begin{proposition}\label{necessaryConditionToSatisfyEquationsOfMotion}
Consider the equations of motion that has the translation and rotation invariance.
The necessary conditions for the choreographic motion \eqref{choreograph1} on the $p$-lima\c{c}on
($p\ne 0, \pm 1$)
to satisfy the equations of motion
are $p/N, (p-1)/N \notin \mathbb{Z}$.
\end{proposition}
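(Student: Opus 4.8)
The plan is to recognize that this proposition merely assembles the two necessary conditions already extracted in the two preceding subsections, so the argument reduces to explaining why each conservation law forces the stated arithmetic condition. First I would record the structural fact that drives the center-of-mass part: for any potential of the form $V=V(|q_0-q_1|^2,\dots,|q_i-q_j|^2,\dots)$ the force on body $j$ is $-\partial V/\partial q_j=\sum_{l\ne j}2\,\frac{\partial V}{\partial |q_j-q_l|^2}(q_l-q_j)$ with a coefficient symmetric in $j,l$. Summing the equations of motion \eqref{eqOfMotion1} over $j$ and using the antisymmetry $q_l-q_j=-(q_j-q_l)$ then makes the right-hand side cancel in pairs, so $\ddot g=0$ along any genuine solution; this is translation invariance expressed as conservation of total momentum. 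Since the candidate motion \eqref{choreograph1} is $2\pi$-periodic, a first moment $g$ with $\ddot g=0$ must in fact be constant, because a nonconstant affine function cannot be periodic.

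Next I would invoke the explicit evaluation of $g(t)$ from the first subsection. When $p/N\notin\mathbb{Z}$ one has $\omega^p\ne 1$ and the two geometric sums vanish, giving $g\equiv 0$; when $p/N\in\mathbb{Z}$ one instead gets $g(t)=bN e^{ipt}$, which is nonconstant since $b\ne 0$ and $p\ne 0$. Hence the candidate cannot solve the equations of motion unless $p/N\notin\mathbb{Z}$, which is the first condition. For the second condition I would use rotation invariance in the form of conservation of the angular momentum $c=\operatorname{Im}\mathcal{C}$. From the evaluation of $\mathcal{C}$ in the second subsection, its only time-dependent contribution is the $\omega^{p-1}$-sum, so $c$ is constant exactly when $\omega^{p-1}\ne 1$, i.e. when $(p-1)/N\notin\mathbb{Z}$; otherwise $c=(a^2+pb^2)N+abN(p+1)\cos\big((p-1)t\big)$, which is nonconstant because $ab\ne 0$ and $p\ne 1$. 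As conservation of angular momentum is a necessary feature of any solution of the rotation-invariant system, $(p-1)/N\notin\mathbb{Z}$ follows, and combining the two conditions yields the proposition.

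I do not expect a serious obstacle, since the two quantitative computations are already in place and the proof is essentially a bookkeeping step that packages them. The only point requiring genuine care is the logical bridge from invariance to constancy: translation invariance alone yields only \emph{uniform} center-of-mass motion, and it is the periodicity of the choreography that upgrades \emph{uniform} to \emph{constant}. I would state this step explicitly rather than appealing loosely to a ``constant center of mass,'' so that the necessity direction is airtight; the angular-momentum step, by contrast, needs no such upgrade, since $c$ itself (not merely $\dot c$) is the conserved Noether charge and the candidate's $c$ is manifestly periodic.
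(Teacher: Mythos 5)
Your proof is correct and follows essentially the same route as the paper: it assembles the two computations already carried out in Section \ref{Sec: Nece} (the first moment $g(t)=bNe^{ipt}$ when $p/N\in\mathbb{Z}$, and the nonconstant angular momentum when $\omega^{p-1}=1$) and uses translation/rotation invariance of the equations of motion to conclude that the conditions are necessary. Your explicit observation that translation invariance by itself only yields $\ddot g=0$, and that it is the periodicity of the choreography which upgrades uniform center-of-mass motion to a constant one, is a point the paper leaves implicit; it tightens the argument without changing its substance.
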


\begin{proof}
If $p/N, (p-1)/N \in \mathbb{Z}$ (and $p\ne 0, \pm 1$),
the center of mass and the angular momentum of the motion are not constant.
Therefore, this motion cannot satisfy the equations of motion.
\end{proof}

\section{Main Result I} \label{Sec:Main-1}  

In this section, we present our first solution to the main problem stated in the introduction. We begin by rewriting the equation \eqref{eqOfMotion2} and then derive an additional necessary condition on \( p \) and \( N \). Together with the two conditions obtained in Section \ref{Sec: Nece}, we show that these constraints are not only necessary but also sufficient. In other words, if these conditions on \( p \) and \( N \) hold, there exist values of  
\[
\kappa_1, \dots, \kappa_{\lfloor \frac{N}{2} \rfloor}
\]  
such that the \( N \)-body choreographic motion on a \( p \)-limaçon satisfies \eqref{eqOfMotion1}.  

Throughout this section, unless otherwise specified, we set \( n = \lfloor N/2 \rfloor \).

\subsection{Equations of motion}\label{Subsec:equ}
%Now, we have to show that the choreographic motion (\ref{choreograph2})
%actually satisfy the equations of motion (\ref{eqOfMotion1}),
%if we choose $\kappa_i$ properly.
Note that  equation (\ref{eqOfMotion2}) can be written as
\begin{equation}\label{eqOfMotion3}
\begin{split}
\ddot{z}_0 
&= \sum_{j\ne 0} \kappa_{0j}(z_j-z_0)\\
&=\kappa_1\Big((z_1-z_0)+(z_{N-1}-z_0)\Big)
	+\kappa_2(\Big((z_2-z_0)+(z_{N-2}-z_0)\Big)
	+\dots\\
&=\kappa_1(z_1+z_{N-1}-2z_0)
	+\kappa_2((z_2+z_{N-2}-2z_0)
	+\dots
\end{split}
\end{equation}
The last term is given by, 
\begin{equation}\notag
\begin{cases}
\kappa_n (z_n-z_0) & \mbox{ for even } N,\\
\kappa_n (z_n+z_{N-n}-2z_0) & \mbox{ for odd } N.
\end{cases}
\end{equation}
See figure \ref{figKappas}.
Since  $\omega^N=1$,
\begin{equation} \notag
z_{N-j}
=ae^{it}\omega^{N-j}+be^{ipt}\omega^{p(N-j)}
=ae^{it}\omega^{-j}+be^{ipt}\omega^{-pj}.
\end{equation}
Therefore,
the equation of motion (\ref{eqOfMotion2}) is reduced to the condition on the parameters $p, N$, and $\kappa_i$ given by,
\begin{equation}\label{eqOfMotion5}
\begin{split}
-ae^{it}-p^2be^{ipt}
=&ae^{it}\left(
	\kappa_1(\omega+\omega^{-1}-2)+\kappa_2(\omega^2+\omega^{-2}-2)\dots
	\right)\\
	&+
	be^{ipt}\left(
	\kappa_1(\omega^p+\omega^{-p}-2)+\kappa_2(\omega^{2p}+\omega^{-2p}-2)\dots
	\right).
\end{split}
\end{equation}
The last term in the right hand side is
\begin{equation}\notag
\begin{cases}
ae^{it}\kappa_n (\omega^n-1) + be^{ipt}\kappa_n \big(\omega^{pn}-1\big)& \mbox{ for even } N,\\
ae^{it}\kappa_n (\omega^n+\omega^{-n}-2) + be^{ipt}\kappa_n (\omega^{pn}+\omega^{-pn}-2)
				& \mbox{ for odd } N.
\end{cases}
\end{equation}
% For even $N$, we have used $n=N/2$ and $\omega^{n}=-1$.

Since $e^{it}$ and $e^{ipt}$ are linear independent functions in $t$,
the coefficients of $ae^{it}$ and $be^{ipt}$ must be balanced independently.
Therefore, the equation (\ref{eqOfMotion5}) yields
\begin{equation}\label{eqOfMotion6EvenN}
\left(\begin{array}{c}
	-1\\
	-p^2
\end{array}\right)
=\left(\begin{array}{cccc}
	\omega+\omega^{-1}-2& \omega^2+\omega^{-2}-2 & \dots &\omega^n-1 \\
	\omega^p+\omega^{-p}-2& \omega^{2p}+\omega^{-2p}-2 & \dots &\omega^{pn}-1
\end{array}\right)
\left(\begin{array}{c}\kappa_1 \\\kappa_2 \\\vdots \\\kappa_n\end{array}\right)
\end{equation}
for even $N$,
and
\begin{equation}\label{eqOfMotion6OddN}
\left(\begin{array}{c}
	-1\\
	-p^2
\end{array}\right)
=\left(\begin{array}{cccc}
	\omega+\omega^{-1}-2& \omega^2+\omega^{-2}-2 & \dots &\omega^n+\omega^{-n}-2 \\
	\omega^p+\omega^{-p}-2& \omega^{2p}+\omega^{-2p}-2 & \dots &\omega^{pn}+\omega^{-pn}-2 
\end{array}\right)
\left(\begin{array}{c}\kappa_1 \\\kappa_2 \\\vdots \\\kappa_n\end{array}\right)
\end{equation}
for odd $N$.

As a direct corollary, we see that \emph{the values of $a, b$ are irrelevant}. 
Note also that when $N$ is even, we have  $\omega^{n}=-1$.  Hence,  the equations (\ref{eqOfMotion6EvenN}) and (\ref{eqOfMotion6OddN})
are invariant under $p \to -p$.
This invariance yields the following lemma.

\begin{lemma}[Symmetry for $p$]\label{symForP}
If an $N$--body choreographic motion on a $p$-lima\c{c}on curve is the solution
of the equations of motion \eqref{eqOfMotion1} with force coefficients  $\{\kappa_1,\kappa_2,\dots,\kappa_n\}$, then the $N$--body choreographic motion on a $(-p)$-lima\c{c}on curve
solves  the same equations of motion.
\end{lemma}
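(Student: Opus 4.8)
The plan is to reduce the claim to the observation, already flagged just before the statement, that the reduced linear systems \eqref{eqOfMotion6EvenN} and \eqref{eqOfMotion6OddN} are invariant under $p\mapsto -p$. Recall from Subsection \ref{Subsec:equ} that the choreographic motion \eqref{choreograph1} satisfies the equations of motion \eqref{eqOfMotion1} precisely when \eqref{eqOfMotion5} holds, and that—because $e^{it}$ and $e^{ipt}$ are linearly independent functions of $t$ (valid since $p\neq\pm1$, hence also for $-p$)—this is in turn \emph{equivalent} to the matrix equation \eqref{eqOfMotion6EvenN} (for even $N$) or \eqref{eqOfMotion6OddN} (for odd $N$) for the coefficient vector $(\kappa_1,\dots,\kappa_n)$. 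The whole argument then amounts to showing that substituting $-p$ for $p$ leaves both the coefficient matrix and the right-hand side of these systems unchanged, so that the vector $\{\kappa_1,\dots,\kappa_n\}$ solving the $p$-system automatically solves the $(-p)$-system.

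First I would check the right-hand side $(-1,-p^2)$, which is manifestly invariant since $(-p)^2=p^2$. Next I would treat the matrix column by column. The top row entries $\omega^{j}+\omega^{-j}-2$ carry no dependence on $p$ and are therefore untouched. For the bottom row, the generic entry $\omega^{jp}+\omega^{-jp}-2$ is symmetric under $p\mapsto -p$, the substitution merely interchanging the two exponentials. For odd $N$ the last column has this same symmetric shape and needs no special treatment. The only entries requiring a separate check are those in the last column when $N$ is even, namely $\omega^{n}-1$ and $\omega^{pn}-1$; here I would invoke $\omega^{n}=-1$ (valid for even $N$) to write $\omega^{pn}=(\omega^{n})^{p}=(-1)^{p}$, so that $\omega^{pn}-1=(-1)^{p}-1$ is visibly invariant under $p\mapsto -p$, while $\omega^{n}-1=-2$ is $p$-independent.

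Having verified that the matrix and the right-hand side of \eqref{eqOfMotion6EvenN}/\eqref{eqOfMotion6OddN} are literally identical for $p$ and for $-p$, I would conclude that $(\kappa_1,\dots,\kappa_n)$ solving the $p$-system also solves the $(-p)$-system, and hence, by the equivalence above, that the $N$-body choreography on the $(-p)$-limaçon satisfies \eqref{eqOfMotion1} with the same force coefficients. I do not expect a genuine obstacle: this is a pure symmetry of the reduced algebraic system, and the single point needing care is the even-$N$ last column, where the identity $\omega^{n}=-1$ is what makes the invariance visible. It is worth noting in passing that the necessary conditions $p/N,(p\pm1)/N\notin\mathbb{Z}$ are themselves symmetric under $p\mapsto -p$, so the reduction used here is equally legitimate for $-p$.
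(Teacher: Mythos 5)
Your proposal is correct and follows essentially the same route as the paper: the paper notes $\omega^{n}=-1$ for even $N$, observes that \eqref{eqOfMotion6EvenN} and \eqref{eqOfMotion6OddN} are invariant under $p\to-p$, and declares the lemma immediate from this. You have merely spelled out, column by column, the verification the paper leaves implicit (including the even-$N$ last column via $\omega^{pn}=(-1)^{p}$), which is exactly the intended argument.
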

\begin{proof}
Follows immediately from the above equations.
\end{proof}

%{\color{red}TF comment:
%This is just a comment. I don't intend to include the followings.
%
%Where did this symmetry come from?
%One reason is the $Z_N$ symmetry of the potential.
%Namely, $\kappa_{0,\ell}$ and $\kappa_{0,N-\ell}$ are equal for any $\ell$.
%
%Moreover, we need the linearity of the right hand side of the equations of motion.
%Consider more generic case, for example $V_{ij}(|q_i-q_j|)$ for the potential.
%Then the right hand side of the equation of motion for $z_0$ will be
%\[
%\kappa_\ell\Big(
%(q_\ell-q_0)f_\ell(|(q_\ell-q_0)|)+(q_{N-\ell}-q_0)f_\ell(|(q_{N-\ell}-q_0)|)
%\Big)
%\]
%with the a function $f_\ell$ (which may depends on $\ell$).
%This potential and the force keeps the $Z_N$ invariance.
%Then the right hand side is
%\[
%\begin{split}
%\kappa_\ell\Bigg(
%&\left(ae^{it}\left(\omega^\ell-1\right)+be^{ipt}\left(\omega^{p\ell}-1\right)\right)f_\ell(|(q_\ell-q_0)|)\\
%+
%&\left(ae^{it}\left(\omega^{-\ell}-1\right)+be^{ipt}\left(\omega^{-p\ell}-1\right)\right)f_\ell(|(q_{N-\ell}-q_0)|)
%\Bigg).
%\end{split}
%\]
%In this general case, $f_\ell(\dots)$ depends on $t$ in general,
%therefore the terms that proportional to $e^{it}$ and $e^{ipt}$ are not linear independent.
%
%So I think, the condition for $\kappa_\ell$ doesn't have $p \leftrightarrow -p$ invariance, in general.
%}

\begin{proposition}\label{anotherNecessaryCondition} A necessary condition for the choreographic motion \eqref{choreograph1} on the $p$-lima\c{c}on ($p\ne 0, \pm 1$) to satisfy 
the equations of motion \eqref{eqOfMotion1} is $(p+1)/N \notin \mathbb{Z}$.
\end{proposition}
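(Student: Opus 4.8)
The plan is to argue by contradiction directly from the reduced linear systems \eqref{eqOfMotion6EvenN} and \eqref{eqOfMotion6OddN}, which already encode the full content of the equations of motion \eqref{eqOfMotion1} for the choreographic motion (recall it was shown that $a,b$ are irrelevant, so the entire problem reduces to the solvability of these systems in the unknowns $\kappa_1,\dots,\kappa_n$). Suppose, contrary to the claim, that $(p+1)/N \in \mathbb{Z}$. In the complex notation this means $\omega^{p+1}=1$, or equivalently $\omega^{p}=\omega^{-1}$. The key observation is that this single relation collapses the two rows of the coefficient matrix onto each other.

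First I would substitute $\omega^{p}=\omega^{-1}$ into the generic column entries. Each entry in the bottom row has the form $\omega^{pj}+\omega^{-pj}-2$, and under $\omega^{p}=\omega^{-1}$ this becomes $\omega^{-j}+\omega^{j}-2$, which is exactly the corresponding entry $\omega^{j}+\omega^{-j}-2$ in the top row. I would then check that the exceptional last column matches as well: for odd $N$ the final entries have the same cosine form with the identical argument, while for even $N$ one uses $\omega^{n}=-1$ to see that $\omega^{pn}-1=\omega^{-n}-1=-2=\omega^{n}-1$. Hence the $2\times n$ coefficient matrix has two identical rows.

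With the coefficient matrix having equal rows, any vector $(\kappa_1,\dots,\kappa_n)$ necessarily produces a right-hand side whose two entries coincide. But the prescribed right-hand side is $(-1,-p^2)^{\mathsf T}$, and equality of its two entries forces $p^2=1$, i.e. $p=\pm 1$. Since the hypothesis of the proposition excludes $p=\pm 1$, the system \eqref{eqOfMotion6EvenN}--\eqref{eqOfMotion6OddN} is inconsistent and admits no solution in the $\kappa_i$. Therefore the choreographic motion cannot satisfy \eqref{eqOfMotion1} when $(p+1)/N\in\mathbb{Z}$, which is precisely the asserted necessary condition $(p+1)/N\notin\mathbb{Z}$.

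I do not expect a genuine obstacle here: the whole argument rests on the single algebraic identity $\omega^{p}=\omega^{-1}\Rightarrow\omega^{pj}+\omega^{-pj}=\omega^{j}+\omega^{-j}$, after which consistency fails for purely numerical reasons involving only the right-hand side. The one point demanding a little care is the even-$N$ case, where the final column has the exceptional form $\omega^{pn}-1$ rather than $\omega^{pn}+\omega^{-pn}-2$; there I would explicitly invoke $\omega^{n}=-1$ to confirm that the two rows still coincide, so that the contradiction goes through uniformly in the parity of $N$.
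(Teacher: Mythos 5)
Your proof is correct, and it takes a genuinely different route from the paper's. You work directly with the reduced systems \eqref{eqOfMotion6EvenN}--\eqref{eqOfMotion6OddN}: from $\omega^{p+1}=1$, i.e.\ $\omega^{p}=\omega^{-1}$, every bottom-row entry $\omega^{pj}+\omega^{-pj}-2$ collapses onto the top-row entry $\omega^{j}+\omega^{-j}-2$, and in the exceptional last column for even $N$ the identity $\omega^{n}=-1$ gives $\omega^{pn}-1=\omega^{-n}-1=-2=\omega^{n}-1$; identical rows force $-1=-p^{2}$, impossible since $p\ne\pm1$. The paper instead composes two results it already has: Lemma \ref{symForP} (invariance of the reduced systems under $p\to-p$) shows the $(-p)$-lima\c{c}on choreography would also solve \eqref{eqOfMotion1}, and then Proposition \ref{necessaryConditionToSatisfyEquationsOfMotion}, applied to $-p$, yields $(-p-1)/N\notin\mathbb{Z}$, which is equivalent to $(p+1)/N\notin\mathbb{Z}$. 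The paper's route thus ultimately rests on a conservation-law fact (the angular momentum fails to be constant when $\omega^{p-1}=1$), whereas yours never leaves the linear algebra of the $2\times n$ system; your mechanism --- row collapse producing an inconsistent system --- is in fact the same device the paper deploys later in Case II (even $N$) of the proof of Theorem \ref{thm2}. What each buys: your argument is self-contained and exhibits concretely why the equations cannot be satisfied; the paper's is shorter given its prior results and makes the structural $p\leftrightarrow-p$ symmetry do the work. Your premise that the two-row systems carry the full content of \eqref{eqOfMotion1} for the choreography is justified by Section \ref{Subsec:equ} (the choreography plus the $Z_N$ form of the $\kappa_{jl}$ reduce everything to the $j=0$ equation, and $ab\ne0$ together with the linear independence of $e^{it}$ and $e^{ipt}$ for $p\ne 1$ permits matching coefficients), so there is no gap there.
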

\begin{proof}
If a motion on $p$-lima\c{c}on with $p=p_0$ satisfies the equations of motion \eqref{eqOfMotion1},
the motion on $p=-p_0$ also satisfy the same equations by the Lemma \ref{symForP}.
Then by the Proposition \ref{necessaryConditionToSatisfyEquationsOfMotion},
$p=-p_0$ must satisfy $(p-1)/N=(-p_0-1)/N \notin \mathbb{Z}$.
Which is equivalent to $(p_0+1)/N \notin \mathbb{Z}$.
\end{proof}

This proposition shows that
although   the choreographic motion \eqref{choreograph1} on the  lima\c{c}ons with $p \equiv -1 \mod N$ satisfy
the necessary conditions for the center of mass and the angular momentum,
they fail to satisfy the equations of motion. 
%{\color{purple} SZ: the following sentence is not necessary. }
%  We can  check the propositions \ref{necessaryConditionToSatisfyEquationsOfMotion}
%  and \ref{anotherNecessaryCondition}
%  by substituting $p\equiv 0, \pm 1$
%  into the equation (\ref{eqOfMotion6EvenN}) or (\ref{eqOfMotion6OddN}).
%  }

\subsection{Necessary and sufficient conditions for $p$ and $N$} \label{Subsec:Main1}

Now we are in conditions to state and prove the first  main result of this paper.

\begin{theorem}\label{thm1} Consider  the $N$--body problem  \eqref{eqOfMotion1}. 
The necessary and sufficient condition for $p$ to yield  a choreography on a $p$-lima\c{c}on curve
 ($p\ne 0, \pm 1$)  is that
\begin{equation}\label{condition1}
p/N, (p\pm 1)/N \notin \mathbb{Z}. 
\end{equation}
\end{theorem}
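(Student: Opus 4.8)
The plan is to split the statement into necessity and sufficiency. Necessity is already available: Proposition~\ref{necessaryConditionToSatisfyEquationsOfMotion} gives $p/N,(p-1)/N\notin\mathbb{Z}$ and Proposition~\ref{anotherNecessaryCondition} gives $(p+1)/N\notin\mathbb{Z}$, and together these are exactly \eqref{condition1}. So the entire content lies in sufficiency: assuming \eqref{condition1}, I must exhibit real coefficients $\kappa_1,\dots,\kappa_n$ solving the linear system \eqref{eqOfMotion6EvenN} (for even $N$) or \eqref{eqOfMotion6OddN} (for odd $N$). Denoting the coefficient matrix by $M$ --- a real $2\times n$ matrix, since each entry has the form $2\cos(\cdot)-2$ or $(\pm1)-1$ --- and the right-hand side by $b=(-1,-p^{2})^{\top}$, the system is solvable exactly when $b$ lies in the column space of $M$. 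Hence it suffices to prove $\operatorname{rank}M=2$, for then the column space is all of $\mathbb{R}^{2}$.

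Next I would normalize the matrix. For even $N$ the last column of $M$ is $\tfrac12$ times the ``uniform'' column $2(\cos(2\pi j/N)-1,\ \cos(2\pi pj/N)-1)^{\top}$ at $j=n=N/2$; rescaling one column by the nonzero factor $\tfrac12$ affects neither the rank nor the column space, so I may take both rows of $M$ in the uniform form, namely row $m$ equal to $\big(2(\cos(2\pi m j/N)-1)\big)_{j=1}^{n}$ for $m=1$ and $m=p$. The key observation is that, since $\cos$ is even and $2\pi pj/N$ depends only on $pj\bmod N$, the row indexed by $p$ depends only on $p$ modulo $N$ up to sign; I may therefore replace $p$ by the representative $p^{*}\in\{0,1,\dots,n\}$ with $p^{*}\equiv\pm p\pmod N$ without altering $\operatorname{rank}M$. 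Under \eqref{condition1} one has $p^{*}\neq0,1$, so $2\le p^{*}\le n$.

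Rank-two then follows from Chebyshev polynomials and a root count. The first row is nonzero because $2(\cos(2\pi/N)-1)\neq0$, so $\operatorname{rank}M=1$ would force the $p^{*}$-row to equal $\lambda$ times the first row for some $\lambda\in\mathbb{R}$, i.e. $\cos(2\pi p^{*}j/N)-1=\lambda(\cos(2\pi j/N)-1)$ for $j=1,\dots,n$, an identity that also holds trivially at $j=0$. Writing $\cos(p^{*}\theta)=T_{p^{*}}(\cos\theta)$ with $T_{p^{*}}$ the Chebyshev polynomial of degree $p^{*}$, the polynomial $Q(x)=T_{p^{*}}(x)-1-\lambda(x-1)$ would vanish at the $n+1$ distinct values $\cos(2\pi k/N)$, $k=0,1,\dots,n$. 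Since $p^{*}\le n<n+1$, this forces $Q\equiv0$ and hence $T_{p^{*}}$ affine, which is impossible for $p^{*}\ge2$. Therefore $\operatorname{rank}M=2$, the system is solvable, and sufficiency follows.

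I expect the main obstacle to be the unbounded growth of $p$: the entries $\cos(2\pi pj/N)$ carry a Chebyshev polynomial of degree $|p|$, so a direct root count is inconclusive once $|p|>n$. The decisive device is the reduction modulo $N$ to $p^{*}\in\{2,\dots,n\}$ --- legitimate precisely because the $p$-row is unchanged, in line with the $p\mapsto-p$ symmetry of Lemma~\ref{symForP} --- which caps the effective degree at $n$ and makes the $n+1$ distinct cosine values decisive. To be safe I would verify the small cases $N=4,5$ (where $n=2$ and \eqref{condition1} forces $p^{*}=2$) and check the even-$N$ column rescaling by hand.
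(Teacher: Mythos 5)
Your proposal is correct, but your sufficiency argument takes a genuinely different route from the paper's. The necessity half is identical (both cite Propositions~\ref{necessaryConditionToSatisfyEquationsOfMotion} and \ref{anotherNecessaryCondition}). For sufficiency, the paper isolates the $2\times2$ block $M_t$ formed by the $\kappa_1,\kappa_2$ columns and computes its determinant in closed form,
\[
\det M_t=2\omega^{-(p+1)}(\omega-1)^2(\omega^p-1)^2\left(\cos\left(\tfrac{2p\pi}{N}\right)-\cos\left(\tfrac{2\pi}{N}\right)\right),
\]
which is nonzero exactly under \eqref{condition1}; inverting $M_t$ then gives the explicit affine formula \eqref{equ:solution_kij} for $(\kappa_1,\kappa_2)$ in terms of the free parameters $\kappa_3,\dots,\kappa_n$. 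You instead prove that the full $2\times\lfloor N/2\rfloor$ coefficient matrix has rank two: after rescaling the even-$N$ last column and reducing $p$ to $p^*\in\{2,\dots,n\}$ with $p^*\equiv\pm p\pmod N$ (legitimate by periodicity and evenness of cosine), proportionality of the two rows would force the degree-$p^*$ polynomial $T_{p^*}(x)-1-\lambda(x-1)$ to vanish at the $n+1$ distinct points $\cos(2\pi k/N)$, $k=0,\dots,n$, impossible since $2\le p^*\le n$. Each step of this checks out, including the distinctness of the cosine values (the angles lie in $[0,\pi]$ where cosine is strictly monotone) and the fact that rank two of a real $2\times n$ matrix makes the system solvable over $\mathbb{R}$ for every right-hand side. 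What each approach buys: the paper's determinant computation gives more than existence --- it identifies $(\kappa_3,\dots,\kappa_n)$ as free parameters and produces the explicit solutions used later (the $N=6$ formulas \eqref{N=6_2}, \eqref{N=6_3} and the hyperplane remark); your argument is cleaner and uniform in $p$ --- no closed-form determinant, with the mod-$N$ reduction capping the Chebyshev degree once and for all --- but it is purely existential, since rank two guarantees some invertible $2\times2$ minor without saying which, so it does not by itself recover the paper's parametrization of the full solution set.
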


\begin{proof}

\emph{Necessary condition}: The condition $p/N, (p-1)/N \notin \mathbb{Z}$ was already proved 
in Proposition \ref{necessaryConditionToSatisfyEquationsOfMotion}
in Section \ref{Sec: Nece}. The condition $(p+1)/N \notin \mathbb{Z}$ was already proved in Proposition \ref{anotherNecessaryCondition}.

\emph{Sufficient condition}: When $p/N, (p\pm 1)/N \notin \mathbb{Z}$, we show that we can always find $\kappa_1, \dots, \kappa_n$ such that the $N$--body choreographic motion on a $p$-lima\c{c}on curve solves the equations of motion. 

We start by splitting the coefficient matrix in (\ref{eqOfMotion6EvenN}) or (\ref{eqOfMotion6OddN})
into the coefficient matrix $M_t$ for $\kappa_1, \kappa_2$,
and $\widetilde{M}$ for the other $\kappa$'s.
Namely,
\begin{equation}
\left(\begin{array}{c}
	-1\\
	-p^2
\end{array}\right)
=M_t
\left(\begin{array}{c}\kappa_1 \\\kappa_2\end{array}\right)
+\widetilde{M}\left(\begin{array}{c}\kappa_3 \\\kappa_4\\\vdots\\\kappa_{\lfloor N/2 \rfloor}\end{array}\right),
\end{equation}
where
\begin{equation}\notag
M_t=\left(\begin{array}{cc}
	\omega+\omega^{-1}-2& \omega^2+\omega^{-2}-2 \\
	\omega^p+\omega^{-p}-2& \omega^{2p}+\omega^{-2p}-2
\end{array}\right).
\end{equation}
and $\widetilde{M}$ is the $2 \times (\lfloor N/2 \rfloor-2)$ matrix,
where each of its entries is real and depends on $p,N$.

The determinant of $M_t$ is 
\begin{equation}\notag
\begin{split}
&(\omega -1)^2 \omega ^{-2 (p+1)} \left(\omega ^p-1\right)^2 \left(\omega ^{2 p+1}-\omega ^{p+2}-\omega ^p+\omega \right) \\
=&(\omega -1)^2 \omega ^{-2 (p+1)} \left(\omega ^p-1\right)^2  \omega ^{p+1}  \left(\omega ^{p}+\omega ^{-p}- \omega -\omega ^{-1} \right) \\
%=& 2 \omega ^{- p-1}   (\omega -1)^2 \left(\omega ^p-1\right)^2    \left(\cos \frac{p 2\pi}{N}- \cos \frac{ 2\pi}{N}\right).\\
=&2\omega^{-(p+1)}(\omega-1)^2 (\omega^p-1)^2
	\left(\cos\left(\frac{2p\pi}{N}\right)-\cos\left(\frac{2\pi}{N}\right)\right) \neq 0, 
\end{split}
\end{equation}
 if $p/N,(p \pm 1)/N \notin \mathbb{Z}$.

 Therefore, $M_t$ has an inverse, we obtain
\begin{equation}\label{equ:solution_kij}
\left(\begin{array}{c}\kappa_1 \\\kappa_2\end{array}\right)
=M_t^{-1}\left(
		\left(\begin{array}{c}
			-1\\
			-p^2
		\end{array}\right)
		-
		\widetilde{M}\left(\begin{array}{c}\kappa_3 \\\kappa_4\\\vdots\\\kappa_{\lfloor N/2 \rfloor}\end{array}\right)
	\right).
\end{equation}
Thus, $\kappa_1$ and $\kappa_2$ are determined uniquely
by $p,N,\kappa_3,\kappa_4,\dots,\kappa_{\lfloor N/2 \rfloor}$.
This ends the proof of Theorem \ref{thm1}.
\end{proof}

Thus, we have answered the problem proposed in the introduction.  That is, to construct an  $N$-body choreography on a $p$-Lima\c{c}on curve, the values of $a, b$ are irrelevant, the values of $p, N$ have to satisfy the condition \eqref{condition1}, and when the condition is satisfied, the force coefficients 
	$\kappa_1, \ \dots, \ \kappa_n$ are determined by using  \eqref{equ:solution_kij}.

As one example, let  $N=6$, where the potential depends on the three force coefficients  $\kappa_1, \kappa_2, \kappa_3$.  When 
 $p=6k\pm 2$, 
 \eqref{equ:solution_kij} says
\begin{equation}\label{N=6_2}
\kappa_1=\frac{p^2-1}{2}+\kappa_3, \quad
\kappa_2=\frac{-p^2+3}{6}-\kappa_3.
\end{equation}
When $p=6k+3$,  \eqref{equ:solution_kij} says
\begin{equation}\label{N=6_3}
\kappa_1=\frac{p^2}{4}-\frac{\kappa_3}{2},\quad
\kappa_2=\frac{-p^2+4}{12}-\frac{\kappa_3}{2}.
\end{equation}

\begin{remark}
	We observe that \eqref{equ:solution_kij}  determines an affine transformation from $(\kappa_3, \kappa_4, \dots, \kappa_{\lfloor N/2 \rfloor})$ to  $(\kappa_1, \kappa_2)$. So those $(\kappa_3, \kappa_4, \dots, \kappa_{\lfloor N/2 \rfloor})$ that makes $\kappa_1$ or $\kappa_2$  zero form one $(\lfloor N/2 \rfloor-3)$-dimensional hyper-plane. Therefore, in most cases,  all $\kappa_i$'s are non-zero, i.e.,  there is interaction between every pair of particles.  
\end{remark}

% {\color{gray}
% \begin{remark}
%	In \cite{Chenciner2},   the authors showed  that there are infinitely many distinct main simple choreographies, when the potential is a strong-force type.  They also conjectured that  there is a main simple choreography solution for the equal mass Newtonian $N$-body problem different from the trivial circle one (i.e., the rotating $N$-gon) for every $N \geq 3$, and the number of such distinct main simple choreographies grows rapidly with $N$. The conjecture was confirmed by Yu \cite{Yu2016}. In this context, Theorem \ref{thm1} provides another type of solution for the $N$-body problem \eqref{eqOfMotion1}.
%\end{remark}
% }
%
 %\textcolor{magenta}{EPC: The conjecture established by Chenciner et al, and its proof by Yu is for the Newtonian $N$-body problem, but in this paper we are not using the Newtonian potential, so we can not compare it with Theorem 1 in this manuscript.}

% {\color{red}TF: I agree with Ernesto $\uparrow$. Let us concentrate to this harmonic potential model.}

Theorem \ref{thm1} tells us that
$p \equiv 2,3,\dots, N-2 \mod N$
for given $N$.
Then, what are the admissible  $N$ for given $p\ge 2$ ?
The answer is given in  the following corollary.
Since we have the symmetry for $p \leftrightarrow -p$,
it is enough to consider $p\ge 2$.
\begin{cor}
Let $\mathbb{D}(p)$ be the set of divisors of $p$ and  $p\pm 1$. 
In the $N$--body problem  \eqref{eqOfMotion1}, for a fixed $p$, 
the necessary and sufficient condition for $N$ to ensure a choreography  on a $p$-lima\c{c}on curve  is 
 $N \notin \mathbb{D}(p)$.
\end{cor}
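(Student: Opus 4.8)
The plan is to read this corollary as a direct restatement of Theorem \ref{thm1}, simply changing from the language of ``$m/N \in \mathbb{Z}$'' to that of divisibility. The single fact I need is the elementary equivalence: for a positive integer $N$ and an integer $m$, one has $m/N \in \mathbb{Z}$ if and only if $N \mid m$. Applying this with $m = p$, $m = p-1$, and $m = p+1$, the three conditions collected in \eqref{condition1} become, respectively, $N \nmid p$, $N \nmid (p-1)$, and $N \nmid (p+1)$.

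Next I would unwind the definition of $\mathbb{D}(p)$. Reading it as the union of the divisor sets of $p-1$, $p$, and $p+1$, membership $N \in \mathbb{D}(p)$ means precisely that $N$ divides at least one of $p-1$, $p$, $p+1$. Taking the negation, $N \notin \mathbb{D}(p)$ is equivalent to the conjunction $N \nmid p$ and $N \nmid (p-1)$ and $N \nmid (p+1)$, which is exactly the reformulation of \eqref{condition1} obtained above. Since Theorem \ref{thm1} asserts that \eqref{condition1} is necessary and sufficient for a choreography to exist on the $p$-limaçon, this chain of equivalences proves the corollary.

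Finally, I would add a brief consistency remark rather than a further argument. Among the three consecutive integers $p-1$, $p$, $p+1$ there is always a multiple of $2$ and a multiple of $3$, so $\{1,2,3\} \subseteq \mathbb{D}(p)$ for every admissible $p$. Hence the criterion $N \notin \mathbb{D}(p)$ automatically forces $N \ge 4$, in agreement with the standing hypothesis that $N \ge 4$ (recall that for $N = 3$ the $Z_N$ symmetry already collapses to free harmonic oscillators). There is essentially no obstacle here: the whole content of the corollary is the translation of the arithmetic conditions of Theorem \ref{thm1} into divisor language, so the only point requiring a little care is fixing the intended reading of $\mathbb{D}(p)$ as a union of divisor sets and verifying that this reading makes the two formulations coincide.
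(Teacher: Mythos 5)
Your proposal is correct and matches the paper exactly: the paper's proof is simply ``Obvious from condition \eqref{condition1},'' and your argument is the explicit spelling-out of that translation between the divisibility statements $p/N,\,(p\pm 1)/N \notin \mathbb{Z}$ and the condition $N \notin \mathbb{D}(p)$. Your closing remark that $\{1,2,3\} \subseteq \mathbb{D}(p)$ forces $N \ge 4$ also mirrors the observation the paper makes immediately after the corollary.
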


\begin{proof}
Obvious from condition \eqref{condition1}.
\end{proof}
%Since $\mathbb{D}(p)$ always contains $\{1,2,3,p-1,p,p+1\}$, 
%$N\ge 4$ for all $p\ge 2$.

Note that  $\{1,2,3,  p-1,p,p+1\} \subseteq \mathbb{D}(p)$ for any $p\ge 2$,
so $N \geq 4$. Note also that 
any  $N\ge p+2$ bodies can be put on the $p$-lima\c{c}on curve.
The appropriate $N$ are, for example,
$\{\mbox{all }N\ge p+2\}$ for $2\le p\le 5$,
$\{4, \mbox{all }N\ge 8\}$ for $p=6$, 
$\{5, \mbox{all }N\ge 9\}$ for $p=7$, 
$\{5, 6, \mbox{all }N\ge 10\}$ for $p=8$, etc.

%   {\color{red}TF: the remarks are placed here $\downarrow$.  {\color{purple}SZ: I agree.} 
		
\subsection{Remarks on the potential and Saari's conjecture}\label{remarksVandSaari}
It has been shown that the $N$-body choreography on $p$-lima\c{c}on  with $p/N, (p\pm 1)/N \notin \mathbb{Z}$
satisfies the equations of motion (\ref{eqOfMotion1}),
and that the center of mass is set to the origin, the angular momentum, the moment of inertia,
and the kinetic energy are constant.

Since the total energy is constant, the potential energy must be constant.
The first remark is on its value.
\begin{remark}
For the lima\c{c}on solution under $V$ in (\ref{Lagrangian}),
the value of the potential is given by
$V=K=2^{-1}(a^2+p^2b^2)N$.
\end{remark}
The reason is the following:
The constant moment of inertia and the well-known Lagrange-Jacobi identity yields
\begin{equation*}
0
=\ddot{I}
=2\sum_k |\dot{q}_k|^2+2\sum_k q_k \cdot\ddot{q}_k
=4K-2\sum_k q_k\cdot \frac{\partial V}{\partial q_k}
=4(K-V).
\end{equation*}
Namely, $V=K$.

The second remark is for Saari's conjecture.
\begin{remark}
	In the Newtonian $N$-body problem, Saari conjectured that relative equilibria are the only solutions where the moment of inertia with respect to the center of mass remains constant \cite{Saari1970}.  
	For the $N$-body problem \eqref{eqOfMotion1}, we 
	%will establish 
	have established
	the existence of choreographies on $p$-lima\c{c}ons curves ($p\ne 0, \pm 1$). For each of such motion, the moment of inertia remains constant, providing counterexamples to Saari’s conjecture in the context of  the $N$-body problem \eqref{eqOfMotion1}.
\end{remark}

\section{Main Result II} \label{Sec:Main-2}

In this section, we present our second result addressing the main problem described in the introduction.  We impose the following condition on the force coefficients:
\begin{equation}\label{condition2}
\kappa_o = \kappa_1 = \kappa_3 = \kappa_5 = \dots, \quad \kappa_e = \kappa_2 = \kappa_4 = \kappa_6 = \dots.
\end{equation}
This restriction has meaningful interpretations, which we will explain in the first subsection. We then proceed to prove our second main result.

\begin{theorem}\label{thm2}
	Consider the \( N \)-body problem \eqref{eqOfMotion1} under the restriction \eqref{condition2}. The necessary and sufficient condition for \( p \) to obtain a choreography on a \( p \)-limaçon curve  ($p\ne 0, \pm 1$) is as follows:
	\begin{itemize}
		\item For even \( N \), the condition is \( p/N, (p \pm 1)/N \notin \mathbb{Z} \) and \( p \equiv N/2 \mod N \). Then, \( \kappa_o = p^2/N \) and \( \kappa_e = (2 - p^2)/N \).
		\item For odd \( N \), the condition is \( p/N, (p \pm 1)/N \notin \mathbb{Z} \). In this case, \( \kappa_o \) and \( \kappa_e \) are uniquely determined by \( p \) and \( N \).
	\end{itemize}
\end{theorem}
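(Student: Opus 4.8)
The plan is to substitute the restriction \eqref{condition2} into the linear systems \eqref{eqOfMotion6EvenN} and \eqref{eqOfMotion6OddN}, collapsing the $n=\lfloor N/2\rfloor$ unknowns into the two unknowns $\kappa_o,\kappa_e$. Concretely I would work from the coefficient-matching form
\[
-1=\sum_{j=1}^{N-1}\kappa_{0j}(\omega^{j}-1),\qquad -p^{2}=\sum_{j=1}^{N-1}\kappa_{0j}(\omega^{pj}-1),
\]
in which each $j$ appears once, and use $\kappa_{0j}=\kappa_{\min(j,N-j)}$ together with \eqref{condition2} to replace $\kappa_{0j}$ by either $\kappa_o$ or $\kappa_e$. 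The necessity of $p/N,(p\pm1)/N\notin\mathbb{Z}$ is inherited from Theorem \ref{thm1}, so only sufficiency (and, for even $N$, the extra necessity of $p\equiv N/2$) needs work. Throughout I would evaluate the resulting partial sums of roots of unity via $\sum_{j=0}^{N-1}\omega^{rj}=0$ whenever $N\nmid r$.

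For even $N$ the key elementary observation is that $N-j$ and $j$ have the same parity, so $\kappa_{0j}=\kappa_o$ exactly when $j$ is odd, and each sum splits cleanly by the parity of $j$. Since $\omega^{2}\neq1$ for $N\ge4$, both $\sum_{j\ \mathrm{odd}}\omega^{j}$ and $\sum_{j\ \mathrm{even}}\omega^{j}$ vanish, so both partial sums of $(\omega^{j}-1)$ equal $-N/2$ and the first equation collapses to $\kappa_o+\kappa_e=2/N$ for every $p$. For the second equation the relevant quantity is $\omega^{2p}$: if $\omega^{2p}\neq1$ the odd- and even-$j$ sums of $\omega^{pj}$ again vanish and the equation reduces to $-p^{2}=-\tfrac{N}{2}(\kappa_o+\kappa_e)=-1$, impossible for $p\neq\pm1$; if $\omega^{2p}=1$, i.e. $(N/2)\mid p$, then $N\nmid p$ forces $p\equiv N/2\pmod N$, whence $\omega^{p}=-1$ kills the even-$j$ contribution and yields $-p^{2}=-N\kappa_o$. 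This produces exactly $\kappa_o=p^{2}/N$ and $\kappa_e=(2-p^{2})/N$, and shows solvability under \eqref{condition2} holds iff $p\equiv N/2\pmod N$ (which itself implies the general conditions for $N\ge4$).

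For odd $N$ the parities of $j$ and $N-j$ are opposite, so the grouping is governed by $\min(j,N-j)$; pairing $j$ with $N-j$ gives a reduced $2\times2$ matrix with entries $\alpha_{o},\alpha_{e}=\sum_{m}S_m$ and $\beta_{o},\beta_{e}=\sum_{m}S_{mp}$ over odd/even $m\le n$, where $S_m=\omega^{m}+\omega^{-m}-2$. Using $\sum_{m=1}^{n}S_m=\sum_{m=1}^{n}S_{mp}=-N$ (again the root-of-unity identity), the off-diagonal $-2$ terms cancel and a short manipulation shows the determinant is a nonzero constant multiple of $f(1)-f(p)$, where $f(r)=\sum_{m=1}^{n}(-1)^{m+1}\cos\frac{2\pi m r}{N}$; hence unique solvability is equivalent to $f(1)\neq f(p)$. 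Summing the alternating geometric series and using $(n+\tfrac12)\cdot\frac{2\pi r}{N}=\pi r$ for $N=2n+1$, I expect the closed form
\[
f(r)=\frac12-\frac{(-1)^{n+r}}{2\cos(\pi r/N)},
\]
so $f(1)=f(p)$ would force $\cos(\pi p/N)=(-1)^{p-1}\cos(\pi/N)$. Treating $p$ even and odd separately, the identity $\cos A=\pm\cos B$ forces $p\equiv\pm1\pmod{2N}$ and hence $p\equiv\pm1\pmod N$, exactly what $(p\pm1)/N\notin\mathbb{Z}$ excludes; therefore $f(1)\neq f(p)$ and $(\kappa_o,\kappa_e)$ is uniquely determined.

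The main obstacle is the odd-$N$ case: rewriting the determinant as a multiple of $f(1)-f(p)$ and, above all, producing and justifying the closed form for $f(r)$ (including checking $\cos(\pi r/N)\neq0$ and $\omega^{r}\neq-1$ for $r=1,p$, both guaranteed by $N$ odd and $N\nmid r$), after which the arithmetic exclusion is routine. The even-$N$ case is comparatively short once the parity-of-$j$ observation and the vanishing of the partial root-of-unity sums are in place.
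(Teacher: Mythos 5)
Your proposal is correct and follows essentially the same route as the paper: collapse the equations under \eqref{condition2} to a $2\times2$ linear system in $(\kappa_o,\kappa_e)$, exploit the row-sum identity $M_{11}+M_{12}=M_{21}+M_{22}=-N$ (the paper's Lemma \ref{lem:first-e-value}), split the even-$N$ case according to whether $\omega^{2p}=1$ (yielding inconsistency unless $p\equiv N/2 \pmod N$, and then $\kappa_o=p^2/N$, $\kappa_e=(2-p^2)/N$), and for odd $N$ show nonvanishing of the determinant $-N\bigl(f(1)-f(p)\bigr)$ via the trigonometric closed form, excluded by $(p\pm1)/N\notin\mathbb{Z}$. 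The only cosmetic differences are that the paper works with the second eigenvalue $\lambda=M_{11}-M_{21}$ and splits the odd case into $N=4m+1$ and $N=4m+3$, whereas your unified formula $f(r)=\tfrac12-\tfrac{(-1)^{n+r}}{2\cos(\pi r/N)}$ (which is correct, and equals the paper's expressions in both residue classes) handles both at once.
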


To illustrate the above result, see \eqref{N=6_2} and \eqref{N=6_3} for \( N = 6 \).

In this section, unless otherwise specified, \( n \) is taken to be \( \lfloor N/2 \rfloor \). According to Theorem \ref{thm1}, we assume that \( p/N, (p \pm 1)/N \notin \mathbb{Z} \) throughout this section.

\subsection{Motivation}

Consider an $N$-body system under the influence of a gravitational type force and   an electrostatic type force. Let the position of the $j$-th body be  $q_j$, $j=0,1,2,\dots,N-1$. Assume all masses are equal, and 
the charges are  $e_j=(-1)^j e$ for $j=0,1,2,\dots,N-1$. 
Namely, $m_0=m_1=\dots=m_{N-1}=m$, $e_0=e_2=\dots=e$ and $e_1=e_3=\dots=-e$. More precisely, 
the Lagrangian of the system is  given by
\begin{equation}\label{Lagrangian2}
\begin{split}
L&=\sum_j \frac{m}{2}|\dot{q}_j|^2
-\frac{1}{2}\sum_{j<l} m^2 |q_j-q_l|^{2}
+\frac{1}{2}\sum_{j<l} e_j e_l |q_j-q_l|^{2}\\
&=\sum_j \frac{m}{2}|\dot{q}_j|^2
-\frac{1}{2}\sum_{j<l} \kappa_{jl} |q_j-q_l|^{2}, 
\end{split}
\end{equation}
where 
\begin{equation}\label{equ:kappa}
\kappa_{jl}= m^2 - (-1)^{j+l}e^2
\end{equation}

When  $N$ is odd, \eqref{equ:kappa} implies that $\kappa_{0,1}\ne \kappa_{N-1,0}$,  which breaks the $Z_N$ invariance.
However, when $N$ is even, set 
\[\kappa_o = m^2+e^2,\quad \kappa_e=m^2-e^2.
\]
Then \eqref{equ:kappa} yields the following, 
\begin{align*}
&\kappa_o=\kappa_1=\kappa_{0,1}=\dots=\kappa_{j,j+1}=\dots=\kappa_{N-1,0},\\
&\kappa_e=\kappa_2=\kappa_{0,2}=\dots=\kappa_{j,j+2}=\dots=\kappa_{N-2,0}=\kappa_{N-1,1}, \\
&\kappa_o=\kappa_3=\kappa_{0,3}=\dots=\kappa_{j,j+3}=\dots=\kappa_{N-1,2},\\
&\dots
\end{align*}

Therefore, when $N$ is even, for  the system with the  Lagrangian \eqref{Lagrangian2}, to construct  an $N$-body  choreography on the $p$-lima\c{c}on curve, $q_k(t)=q(t+k \frac{2\pi}{N}), k=0,1,2,\dots, N-1$, 
is equivalent to solve the equation \eqref{eqOfMotion5}, under the restriction \eqref{condition2}.

The above setting provides a meaningful motivation for imposing restriction \eqref{condition2} when 
$N$ is even. 
For odd $N$,  we have not found a direct physical interpretation, but the restriction remains mathematically interesting.
 Note that the restriction itself keeps the $Z_N$ symmetry.  
Therefore, we analyze equation \eqref{eqOfMotion5}  under restriction \eqref{condition2} for all $N\ge 4$
in the following.

Now, we rewrite the equation \eqref{eqOfMotion5}, or equivalently, the system \eqref{eqOfMotion6EvenN} and \eqref{eqOfMotion6OddN}, under the restriction \eqref{condition2}. 
Recall that $n=\lfloor N/2\rfloor$. 
The restriction \eqref{condition2} is 
\[
\left(\begin{array}{c}\kappa_1 \\\kappa_2 \\\vdots \\\kappa_n\end{array}\right)
=\left(\begin{array}{cc}1 & 0 \\0 & 1 \\1 & 0 \\0 & 1 \\\vdots & \vdots\end{array}\right)
\left(\begin{array}{c}\kappa_o \\\kappa_e\end{array}\right).
\]
Let 
\[
\begin{split}
M
&=\left(\begin{array}{cccc}
\omega+\omega^{-1}-2& \omega^2+\omega^{-2}-2 & \dots &\dots \\
\omega^p+\omega^{-p}-2& \omega^{2p}+\omega^{-2p}-2 & \dots &\dots
\end{array}\right)
\left(\begin{array}{cc}1 & 0 \\0 & 1 \\1 & 0 \\0 & 1 \\\vdots & \vdots\end{array}\right)\\
&=\left(\begin{array}{cc}M_{11} & M_{12} \\M_{21} & M_{22}\end{array}\right).
\end{split}
\]
The  equation  \eqref{eqOfMotion5}  is  reduced into
\begin{equation}\label{eqOfMotion7}
\left(\begin{array}{c}
-1\\
-p^2
\end{array}\right)
=M\left(\begin{array}{c}\kappa_o \\\kappa_e\end{array}\right).
\end{equation}

The system \eqref{eqOfMotion7} can be solved by finding the four entries of $M$.  Depending on whether $n = \lfloor N/2 \rfloor$ is odd or even, the last column of the coefficient matrix in \eqref{eqOfMotion6EvenN} or \eqref{eqOfMotion6OddN} corresponds to either the first or the second column of $M$. Consequently, four distinct cases arise depending on the value of \( N \bmod 4 \).
%Then, depending on $n=\lfloor N/2 \rfloor$ is odd or even, the last column of the coefficient matrix of \eqref{eqOfMotion6EvenN} or  \eqref{eqOfMotion6OddN} belongs to  the first or the second column of $M$, so there are four cases with $N\mod 4$. 
Besides, for even $N$,  $M_{21}$ and $M_{22}$ have the term $\sum_k \omega^{2pk}$,
therefore we have to consider $\omega^{2p}=1$ or not.
However, the following observation will greatly simplify the situation. 

\begin{lemma}\label{lem:first-e-value}
If $p/N\notin \mathbb{Z}$, then 	$M\left(\begin{array}{c}1 \\1\end{array}\right)=-N \left(\begin{array}{c}1 \\1\end{array}\right)$. 
\end{lemma}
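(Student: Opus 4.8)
The plan is to interpret $M\binom{1}{1}$ directly in terms of the original force term, rather than computing the four entries $M_{11},\dots,M_{22}$ individually. First I would observe that the restriction matrix applied to $\binom{1}{1}$ produces the all-ones vector of length $n$, so that $M\binom{1}{1}$ is exactly the sum of all $n$ columns of the coefficient matrix appearing in \eqref{eqOfMotion6EvenN} or \eqref{eqOfMotion6OddN}. By the way that matrix was assembled from \eqref{eqOfMotion5}, the first (resp. second) entry of this column sum is precisely the coefficient of $ae^{it}$ (resp. $be^{ipt}$) in the force $\sum_{j\ne 0}\kappa_{0j}(z_j-z_0)$ under the uniform choice $\kappa_i\equiv 1$ for every bond.

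Next I would evaluate that force under $\kappa_{0j}=1$ directly, before any grouping into $\kappa_1,\dots,\kappa_n$. This gives
\[
\sum_{j\ne 0}(z_j-z_0)=\Big(\sum_{j=0}^{N-1} z_j\Big)-N z_0.
\]
The sum $\sum_{j=0}^{N-1} z_j$ is exactly the first moment of mass $g(t)$ analyzed in Section \ref{Sec: Nece}. Since $p/N\notin\mathbb{Z}$ forces $\omega^p\ne 1$, both geometric sums $\sum_k\omega^k$ and $\sum_k\omega^{pk}$ vanish, whence $g(t)=0$. The force therefore collapses to $-N z_0=-N\big(ae^{it}+be^{ipt}\big)$.

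Finally I would read off coefficients: matching $ae^{it}$ shows the first entry of $M\binom{1}{1}$ equals $-N$, and matching $be^{ipt}$ shows the second entry equals $-N$, so $M\binom{1}{1}=-N\binom{1}{1}$, exhibiting $\binom{1}{1}$ as an eigenvector of $M$ with eigenvalue $-N$. The only condition used is $\omega^p\ne 1$, i.e.\ $p/N\notin\mathbb{Z}$, matching the hypothesis exactly.

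I do not expect a genuine obstacle here; the one point needing care is the identification of the restriction matrix applied to $\binom{1}{1}$ with the all-ones selection vector, and the recognition that summing columns corresponds to $\kappa_i\equiv 1$. The real advantage of this route is that working with the ungrouped expression $\sum_{j\ne 0}(z_j-z_0)$ makes the even-$N$ versus odd-$N$ distinction, and in particular the anomalous last column $\omega^n-1$ for even $N$, completely invisible, since the argument simply reuses the center-of-mass computation already established in Section \ref{Sec: Nece}. This lemma will then be what lets one avoid separately worrying about whether $\omega^{2p}=1$ when solving \eqref{eqOfMotion7}.
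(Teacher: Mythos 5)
Your proof is correct and is essentially the paper's own argument in a different guise: the paper proves the lemma by checking $M_{11}+M_{12}=M_{21}+M_{22}=-N$, writing the row sums as $\sum_{k=0}^{N-1}(\omega^{k}-1)$ and $\sum_{k=0}^{N-1}(\omega^{kp}-1)$ and invoking the vanishing geometric sums, which is exactly the content of your identification of the column sum with the uniform-coupling force $\sum_{j\ne 0}(z_j-z_0)=g(t)-Nz_0=-Nz_0$. The only difference is presentational—you reuse the center-of-mass computation of Section \ref{Sec: Nece} where the paper sums the matrix entries directly—and both arguments rest on $\sum_k\omega^{k}=\sum_k\omega^{pk}=0$, the latter requiring precisely $p/N\notin\mathbb{Z}$.
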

\begin{proof}
It is enough to 
verify that $M_{11}+M_{12}=M_{21}+M_{22}=-N. $ 
It is true since 
\begin{align*}
&M_{11}+M_{12}= \sum_{k=0,1,2,\dots,N-1}\left(\omega^{k}-1\right) =-N +\sum_{k=0,1,2,\dots,N-1} \omega^{k}=-N,\\
&M_{21}+M_{22}= \sum_{k=0,1,2,\dots,N-1}\left(\omega^{kp}-1\right) =-N +\sum_{k=0,1,2,\dots,N-1} \omega^{kp}=-N. 
\end{align*}
In the last equation, we used the fact that $\omega^p\ne 1$. This completes the proof. 
\end{proof}

\subsection{Proof of Theorem \ref{thm2}}

\begin{proof}
	Note that to construct  an $N$-body  choreography  on the $p$-lima\c{c}on curve, $q_k(t)=q(t+k \frac{2\pi}{N}), k=0,1,2,\dots, N-1$, under the restriction \eqref{condition2}, 
	is equivalent to solve the equation \eqref{eqOfMotion7}.

	1. $N$ is even. Then $N=2n$. Since $\omega^2=e^{i 4\pi/2n}= e^{i 2\pi/n}\ne 1$ for $n\ge 2$, we obtain 
	\[  M_{11}=\sum_{k=0,1,\dots,n-1}\left(\omega^{2k+1}-1\right)=-n+\frac{\omega(\omega^{2n}-1)}{\omega^2-1}=-n=-N/2.  \]
	The value of $M_{22}=\sum_{k=0,1,\dots,n-1}\left(\omega^{2kp}-1\right)$ depends on whether $\omega^{2p}$ equals $1$ or not. 
	
	\emph{Case I}:  $\omega^{2p}=1$. Then $p\equiv N/2 (\mod N)$  since $p/N \notin \mathbb{Z}$, and 
\[
M_{22}=\sum_{k=0,1,\dots,n-1}\left(\omega^{2kp}-1\right)=0. 
\]
By Lemma \ref{lem:first-e-value}, we obtain 
\[
M=\left(\begin{array}{cc}-N/2 & -N/2 \\-N & 0\end{array}\right).
\]
It is easy to see that solution of the system \eqref{eqOfMotion7}
is 
\[    \kappa_o=p^2/N, \quad \kappa_e=(2-p^2)/N.   \]

\emph{Case II}:  $\omega^{2p}\ne 1$.  Then,
\[
M_{22}=-n+\frac{\omega^{2np}-1}{\omega^{2p}-1}=-n=-N/2.
\]
By Lemma \ref{lem:first-e-value}, we obtain 
\[
M=\left(\begin{array}{cc}-N/2 & -N/2 \\-N/2 &-N/2\end{array}\right).
\]
Then the system \eqref{eqOfMotion7} is inconsistent since $p^2\ne 1$. 

2. $N$ is odd. Then $N=4m+1$ or $N=4m+3$. 
We show that  the rank of $M$ is $2$ by  showing that both   eigenvalues of $M$ are nonzero.  

By Lemma \ref{lem:first-e-value}, $M$ has one eigenvalue being $-N$.  Let the other eigenvalue be $\lambda$,
then 
\[
\lambda=N+M_{11}+M_{22}=M_{11}-M_{21}. 
\]

\emph{Case I}: $N=4m+1$. Note that  $n=(N-1)/2 = 2m$ even, then
\begin{align*}
&M_{11}=\sum_{k=1}^{m}\left(\omega^{2k-1}+\omega^{-(2k-1)}-2\right)&= -2m + \frac{\omega(\omega^{2m}-1)}{\omega^2-1}+\frac{\omega^{-1}(\omega^{-2m}-1)}{\omega^{-2}-1}.\end{align*}
Using $\omega^{-2m}=\omega^{2m+1}$, we obtain 
\[ M_{11}=-2m + \frac{\omega^{2m+1}-\omega^{1-2m}}{\omega^2-1} 
 =-2m - \frac{\omega^{2m+1}}{\omega+1}
=-2m - \frac{\omega^{N/2}}{\omega^{1/2}+\omega^{-1/2}}. \]
 Note that \( M_{21} \) is obtained from \( M_{11} \) by replacing \( \omega \) with \( \omega^p \). Then
\[
\begin{split}
\lambda=-\frac{-1}{2\cos(\pi/N)}+\frac{(-1)^p}{2\cos(\pi p/N)}.
\end{split}
\]
Since $(p\pm 1)/N \notin \mathbb{Z}$, we conclude that 
 $\lambda\ne 0$. 

\emph{Case II}: $N=4m+3$. Note that  $n=(N-1)/2 = 2m+1$ odd. Then 
\[   M_{11}=\sum_{k=1}^{m+1}\left(\omega^{2k-1}+\omega^{-(2k-1)}-2\right)= -2(m+1) + \frac{\omega(\omega^{2m+2}-1)}{\omega^2-1}+\frac{\omega^{-1}(\omega^{-2m-2}-1)}{\omega^{-2}-1}. \]
Using $\omega^{-2m-1}=\omega^{2m+2}$,  we obtain 
\[
M_{11} = -2(m+1) + \frac{\omega^{2m+3}-\omega^{-1-2m}}{\omega^2-1} =-2(m+1)  + \frac{\omega^{2m+2}}{\omega+1}
=-2(m+1) +  \frac{\omega^{N/2}}{\omega^{1/2}+\omega^{-1/2}}. \]
Similar to the above case, we obtain 
\[
\begin{split}
\lambda=\frac{-1}{2\cos(\pi/N)}-\frac{(-1)^p}{2\cos(\pi p/N)}\ne 0. 
\end{split}
\]

Therefore, when $N$  is odd,   the rank of $M$ is $2$.  Thus, there is always a unique solution to the system \eqref{eqOfMotion7}.   This ends the proof of Theorem \ref{thm2}.
\end{proof}

\section{Collisions} \label{Sec:Collision}

Given a suitable $p$ and $N$, the parameters \( a \) and \( b \) are free, provided that collisions are not a concern. However, under certain combinations of \( p \), \( N \), \( a \), and \( b \), collisions may occur, as demonstrated in the following example.

Consider the special lima\c{c}on curve  with $p=2$ and $a=b$, 
\begin{equation}\notag
z(t)=a\left(e^{i t}+e^{2 i t}\right).
\end{equation}
This curve has a self-intersecting point at $(-a, 0)$,  (see  Figure \ref{fig:collision}), which corresponds to
$t=\pm 2 \pi / 3$. Therefore, two bodies $z_k$ and $z_{k^{\prime}}$ will collide at this point if they
satisfy $\left(k-k^{\prime}\right) 2 \pi / N=2 \pi / 3$, namely if $k-k^{\prime}=N / 3$. Thus, for this special 2-lima\c{c}on curve, the choreographic motions with $N \in 3 \mathbb{Z}$ will have collision.
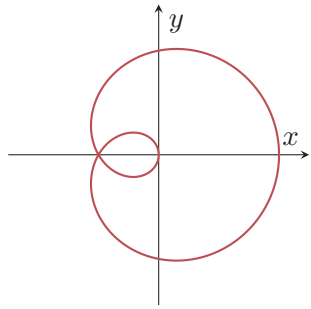
\begin{figure}[h]
	\begin{center}
		\begin{tikzpicture}
			\begin{axis}[width=4cm, height=4cm,
		scale only axis,
			xmin=-2.5, xmax=2.5,
			ymin=-2.5, ymax=2.5,
			xlabel={$x$}, ylabel={$y$},
			samples=200,
			domain=0:2*pi,
			xtick=\empty, ytick=\empty,
			enlargelimits=false,
			axis lines=middle
			]
			\addplot[purple, thick, samples=300, domain=0:2*pi, smooth] (
			{cos(deg(x)) + cos(2*deg(x))},
			{sin(deg(x)) + sin(2*deg(x))}
			);
		\end{axis}
	\end{tikzpicture}
	\end{center}

	\caption{The 2-lima\c{c}on curve for $a=b$. }
	\label{fig:collision}
 \end{figure}

Obviously, if a $p$-lima\c{c}on curve is simple, then any choreographic motion on it has  no collision.   Collisions occur only at the self-intersecting points and self-touching points, i.e.,  a point on the curve that corresponds to multiple instants    $0 \leq t_1<t_2<\cdots<t_l< 2\pi$. Call them \emph{multiple points}. 
The number and location of multiple points of a $p$-lima\c{c}on curve depend on the values of $p, a, b$. 

\begin{lemma}\label{lem:collision}
	Consider a  $p$-lima\c{c}on curve $z(t)=a e^{it} + b  e^{ip t}$. Let $P$ be a multiple point and let  the corresponding instants be $\{ t_1, \cdots, t_\ell\}$, where $0 \leq t_1<t_2<\cdots<t_\ell< 2\pi$. 
	Then, an $N$-body choreographic motion on this $p$-lima\c{c}on curve will have  collision at $P$, if and only if there exist one pair of instants $t_j, t_l (t_j<t_l)$   such that the time  difference 
	$t_l-t_j$ belongs to the set 
	$$\{ \frac{2\pi}{N}, \frac{4\pi}{N}, \cdots, \frac{2(N-1)\pi}{N} \}.$$ 
\end{lemma}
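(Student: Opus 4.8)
The plan is to unwind the definition of a collision directly in terms of the parametrization. In the choreographic motion the $k$-th body occupies $z_k(\tau) = z(\tau + 2k\pi/N)$ at time $\tau$, so a collision at the point $P$ means that at some instant $\tau$ two distinct bodies $k \neq k'$ simultaneously satisfy $z_k(\tau) = z_{k'}(\tau) = P$. First I would record that, since $z$ is $2\pi$-periodic and the instants mapping to $P$ are exactly $T := \{t_1, \dots, t_\ell\}$ (taken modulo $2\pi$), the condition $z(\tau + 2k\pi/N) = P$ is equivalent to $\tau + 2k\pi/N \equiv t_j \pmod{2\pi}$ for some $t_j \in T$. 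Thus a collision at $P$ is the same as the existence of $\tau$, distinct indices $k, k'$, and instants $t_j, t_l \in T$ with
\[
\tau + \tfrac{2k\pi}{N} \equiv t_j, \qquad \tau + \tfrac{2k'\pi}{N} \equiv t_l \pmod{2\pi}.
\]

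Next I would subtract these two congruences to eliminate $\tau$, obtaining $t_l - t_j \equiv \tfrac{2(k'-k)\pi}{N} \pmod{2\pi}$. As $k, k'$ run over $\{0, \dots, N-1\}$ with $k \neq k'$, the residue $m := (k'-k) \bmod N$ runs over $\{1, \dots, N-1\}$, so the right-hand side runs over exactly $\{\tfrac{2\pi}{N}, \tfrac{4\pi}{N}, \dots, \tfrac{2(N-1)\pi}{N}\}$ modulo $2\pi$; in particular the difference is never $\equiv 0$, which forces $t_j \neq t_l$. Ordering the two instants as $t_j < t_l$ places $t_l - t_j$ in the open interval $(0, 2\pi)$, and since every element of the displayed set also lies in $(0, 2\pi)$, the congruence becomes the genuine equality $t_l - t_j = \tfrac{2m\pi}{N}$. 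This gives the forward implication: a collision at $P$ produces a pair $t_j < t_l$ in $T$ whose difference lies in the stated set.

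For the converse I would run the same computation in reverse. Given $t_j < t_l$ in $T$ with $t_l - t_j = \tfrac{2m\pi}{N}$ for some $m \in \{1, \dots, N-1\}$, I set $k = 0$, $k' = m$ (a legitimate distinct index since $1 \le m \le N-1$), and $\tau = t_j$. Then body $0$ sits at $z(t_j) = P$, while body $m$ sits at $z\!\left(t_j + \tfrac{2m\pi}{N}\right) = z(t_l) = P$ by the $2\pi$-periodicity of $z$, so the two bodies collide at $P$ at time $\tau$. Combining the two directions yields the claimed equivalence.

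The computation carries no real analytic difficulty; the only point requiring care --- and the one I would flag as the crux of the bookkeeping --- is the passage from the congruence $t_l - t_j \equiv 2m\pi/N \pmod{2\pi}$ to a true equality. This step relies on confining both $t_l - t_j$ and the target value to the single fundamental interval $(0, 2\pi)$, which is precisely why the membership set is taken to be $\{2\pi/N, \dots, 2(N-1)\pi/N\}$ rather than all integer multiples of $2\pi/N$, and why the ordering $t_j < t_l$ is imposed in the statement.
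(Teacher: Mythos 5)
Your proof is correct and follows essentially the same route as the paper's: unwind the collision condition into the two simultaneous occupancy equations, eliminate the common time by subtraction, and prove the converse by exhibiting an explicit collision time and pair of indices. If anything, your version is slightly more careful than the paper's, which writes the occupancy conditions as exact equalities rather than congruences modulo $2\pi$, and whose converse construction $k' = k + s$ leaves an index reduction modulo $N$ unremarked, whereas your choice $\tau = t_j$, $k = 0$, $k' = m$ sidesteps both issues.
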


\begin{proof}
	A $N$--body choreographic motion on this $p$-lima\c{c}on curve has a collision at $P$ if and only if there exists a pair of  instants $ t_j, t_l$
	$( t_j < t_l$), some indices $ 1 \leq k < k' \leq N $, and a time $ t^* \in (0, 2\pi)$ such that $ z_k(t^*) = z_{k'}(t^*) = P$, or in other words if
	\begin{equation}\label{equ:collision}
	t^* + k  \frac{2\pi}{N}= t_j, \ \ t^* + k' \frac{2\pi}{N} = t_l.  
	\end{equation} 
	Therefore, it is necessary that $t_l-t_j \in  \{ \frac{2\pi}{N}, \frac{4\pi}{N}, \cdots, \frac{2(N-1)\pi}{N} \}$. 
	
	On the other hand, suppose that there exists a pair of  instants 
	$t_j, t_l$ with $t_j<t_l$ such that $t_l-t_j =  s\frac{2\pi}{N} $, where $1\le s\le N-1$. It is clear that there are $k\in \{0, \cdots, N-1\}$ such that 
	\[  t_j - k\frac{2\pi}{N} \in (0, 2\pi).\]
	Let be $t^*= t_j - k\frac{2\pi}{N} $. Then 
	\[ t^*= t_l - k\frac{2\pi}{N}  + t_j -t_l=t_l - k' \frac{2\pi}{N} ,     \]  
	where $k' =k+s$. Hence, \eqref{equ:collision} is satisfied, i.e., at $t=t^*$, $q_k$ will collide with $q_{k'}$ at the point $P$. 
\end{proof}

For  a given $p$--lima\c{c}on curve determined by the values of $p, a$, and $b$,  although collisions  may occur in $N$-body choreographic motions,   it turns out that such $p$-lima\c{c}ons are quite rare. 

\begin{proposition}\label{Prop1}
	For any given $p, N$, there are at most $2(N-1)$ values of $a/b$ such that an
	$N$--body choreographic motion  on the $p$-lima\c{c}on curve $z(t)=a e^{it} + b  e^{ip t}$  experiences collisions. 
\end{proposition}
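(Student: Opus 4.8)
The plan is to combine Lemma~\ref{lem:collision} with a direct count, treating $a/b$ as the unknown. By that lemma, an $N$--body choreographic motion on $z(t)=ae^{it}+be^{ipt}$ has a collision if and only if the curve has a self-intersection whose two parameter values differ by $\delta_s:=s\,\tfrac{2\pi}{N}$ for some $s\in\{1,\dots,N-1\}$; equivalently, there exist $t\in\mathbb{R}$ and $s\in\{1,\dots,N-1\}$ with $z(t)=z(t+\delta_s)$. I would therefore fix $s$ (and write $\delta:=\delta_s$), count how many real values of $a/b$ permit such a coincidence, and then sum over the $N-1$ admissible values of $s$.

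For fixed $\delta$, the equation $z(t)=z(t+\delta)$ reads
\[
ae^{it}\bigl(1-e^{i\delta}\bigr)=be^{ipt}\bigl(e^{ip\delta}-1\bigr).
\]
Since $\delta\in(0,2\pi)$ we have $e^{i\delta}\neq1$, so I may solve for the ratio:
\[
\frac{a}{b}=e^{i(p-1)t}\,\frac{e^{ip\delta}-1}{1-e^{i\delta}}
=-e^{i(p-1)(t+\delta/2)}\,\frac{\sin(p\delta/2)}{\sin(\delta/2)},
\]
where the last equality uses the half-angle factorisation $e^{i\theta}-1=2i\,e^{i\theta/2}\sin(\theta/2)$ together with $\sin(\delta/2)\neq0$ (valid because $\delta/2=s\pi/N\in(0,\pi)$).

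The crucial observation is that $a/b$ is real while $\tfrac{\sin(p\delta/2)}{\sin(\delta/2)}$ is a fixed real number; hence $e^{i(p-1)(t+\delta/2)}$ must be real, i.e.\ equal to $\pm1$, where $p\neq1$ ensures $p-1\neq0$ so this phase is not automatically $1$. Consequently, for this fixed $s$ every value of $a/b$ capable of producing a collision with separation $\delta_s$ lies in the two-element set $\bigl\{\pm\tfrac{\sin(p\delta_s/2)}{\sin(\delta_s/2)}\bigr\}$. Taking the union over $s=1,\dots,N-1$ shows that at most $2(N-1)$ values of $a/b$ can give rise to a collision, which is the claim.

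The argument is essentially a forced-reality computation, so I expect no substantive obstacle; the only care needed is bookkeeping. I would verify that $\sin(\delta_s/2)\neq0$ so the division is legitimate, that $p\neq1$ so the phase $e^{i(p-1)(\cdot)}$ is genuinely non-constant, and that degenerate cases with $\sin(p\delta_s/2)=0$ merely collapse the two candidates to $a/b=0$, which is excluded by the standing assumption $ab\neq0$ and thus contributes no extra solutions. Since each $s$ is bounded independently by $2$, the stated $2(N-1)$ follows without any risk of double-counting inflating the constant.
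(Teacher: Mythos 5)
Your proposal is correct and essentially reproduces the paper's own argument: both solve $z(t)=z(t+\delta)$ for the ratio $a/b$, restrict $\delta$ to the values $2\pi s/N$, $s=1,\dots,N-1$, via Lemma~\ref{lem:collision}, and use the reality of $a/b$ to force the phase factor $-e^{i(p-1)(t+\delta/2)}$ to equal $\pm 1$, yielding the bound $2(N-1)$. The only cosmetic difference is that you invoke Lemma~\ref{lem:collision} explicitly at the outset, whereas the paper starts from a general pair of instants $t_1<t_2$ at a multiple point and substitutes the collision condition $t_2-t_1=2\pi k/N$ implicitly midway through the same computation.
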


\begin{proof}
	Suppose that $P$ is a multiple point of $z(t)=a e^{it} + b  e^{ip t}$ and two  corresponding instants are $0 \leq t_1<t_2<2\pi$. Then $z(t_1)=z(t_2)$, and it follows that 
	\begin{equation}\notag
	1- e^{i (t_2-t_1)}=\frac{b}{a} e^{i (p-1)t_1} (e^{i p (t_2-t_1)}-1).   
	\end{equation}
	Since $1- e^{i (t_2-t_1)}\ne 0$, then 
	%\[  \frac{b}{a}= e^{i (1-p)t_1} \frac{1- e^{i (t_2-t_1)}}{e^{i p (t_2-t_1)}-1}  
	\begin{equation}\notag
	\begin{split}
	\frac{a}{b}
	&=-e^{i(p-1)t_1}\frac{e^{ip(t_2-t_1)}-1}{e^{i(t_2-t_1)}-1}
	=-e^{i(p-1)(t_1+t_2)/2}
	\,\,\frac{e^{ip(t_2-t_1)/2}-e^{-ip(t_2-t_1)/2}}{e^{i(t_2-t_1)/2}-e^{-i(t_2-t_1)/2}}\\
	&=-e^{i(p-1)(t_1+t_2)/2}\,\,\frac{\sin(\pi pk/N)}{\sin(\pi k/N)}.
	\end{split}
	\end{equation}
	Since $a/b$ and $\sin(\pi pk/N)/\sin(\pi k/N)$ are real,
	the factor $-e^{i(p-1)(t_1+t_2)/2}$ must be $\pm 1$.
	Therefore,
	\begin{equation}\label{theRatio}
	\frac{a}{b}
	=\pm \frac{\sin(\pi pk/N)}{\sin(\pi k/N)},
	\,\,k=1,2,3,\dots,N-1.
	\end{equation}

	Therefore, 
	for given $p$ and $N$,
	the number of the quotient $a/b$ 
	that gives collision
	is at most $2(N-1)$.
\end{proof}

\section{Additional constants}\label{Sec:Constants} 

For any  motion that satisfies the equations  \eqref{eqOfMotion1}, the first moment  of mass,  the angular momentum, and the total energy are constants of motion.  In the case of  $N$-body choreographies on a $p$-lima\c{c}on curve, the $Z_N$ symmetry yields additional constants, which we explore in this section.  According to Theorem \ref{thm1}, we assume that \( p/N, (p \pm 1)/N \notin \mathbb{Z} \).

\subsection{Parts of the potential}
Recall that 
\[
z_k^\dag z_{k+\ell}
=(x_k-iy_k)(x_{k+\ell}+iy_{k+\ell})
=q_k\cdot q_{k+\ell}+i\, q_k\times q_{k+\ell}.
\]
On the other hand,
\[
\begin{split}
z_k^\dag z_{k+\ell}
&=\left(ae^{-it}\omega^{-k}+be^{-ipt}\omega^{-pk}\right)
\left(ae^{it}\omega^{k+\ell}+be^{ipt}\omega^{-p(k+\ell)}\right)\\
&=a^2 \omega^\ell + b^2 \omega^{p\ell}
+ab\left(
e^{i(p-1)t}\omega^{(p-1)k + p\ell}
+e^{-i(p-1)t}\omega^{-(p-1)k + \ell}
\right).
\end{split}
\]
Since  $\omega^{p-1}\ne 1$, then 
\[
\sum_{k=0,1,2,\dots,N-1}z_k^\dag z_{k+\ell}
=N\left(a^2 \omega^\ell + b^2 \omega^{p \ell}\right)
=\mbox{ constant}.
\]
Namely, both
\[\begin{split}
\sum_k q_k\cdot q_{k+\ell}=N\left(a^2 \cos(2\pi \ell/N)+b^2\cos(2\pi p\ell/N)\right),\\
\sum_k q_k\times q_{k+m}=N\left(a^2 \sin(2\pi \ell/N)+b^2\sin(2\pi p\ell/N)\right)
\end{split}
\]
are constant.
Since $I=\sum_k |q_k|^2 =(a^2+b^2)N$, 
\begin{align*}
v^{\pm}_\ell
&=\sum_k |q_k\pm q_{k+\ell}|^2=2I \pm  2\sum_k q_k\cdot q_{k+\ell}\\
&=2N\left(a^2\big(1\pm \cos(2\pi \ell/N)\big)+b^2\big(1\pm \cos(2\pi p\ell/N)\big)\right)
\end{align*}
is constant.

The $v^{-}_\ell$ are the parts of the potential.
The potential energy is given by $V=\frac{1}{2}\sum_{\ell=1,2,\dots,\lfloor N/2 \rfloor}\kappa_\ell V_\ell$,
where $V_\ell=v_\ell^{-}$ for $1\le \ell \le \lfloor N/2 \rfloor-1$, and
\[
V_{\lfloor N/2 \rfloor}=
\begin{cases}
\phantom{2^{-1}}v^{-}_{\lfloor N/2 \rfloor}&\mbox{for odd }N,\\
2^{-1}v^{-}_{N/2}&\mbox{for even }N.
\end{cases}
\]

\subsection{Partial sums of  the  first moment  of mass,  momentum of inertia, angular momentum, and kinetic energy   for non-prime $N$}
If $N$ is not a prime number, say, 
$N=mn$, then 
$Z_N=\{1,\omega,\omega^2,\dots,\omega^{N-1}\}$, where $\omega=e^{i 2\pi/N}$, 
has the subgroup 
$Z_m=\{1,\omega^n, \omega^{2n},\dots,\omega^{(m-1)n}\}$.

Let $s$ be one of the  constants,  $g$ the first moment  of mass, $I$ the momentum of inertia, $c$ the angular momentum, and $K$ the kinetic energy, and $s_m(\ell)$ be the partial sum defined by
\[\begin{split}
s=\sum_{\ell=0,1,2,\dots,n-1}s_m(\ell),
\mbox{ and }
s_m(\ell)=\sum_{k=0,1,2,\dots,m-1} Q_{\ell+kn}.
\end{split}\]
For instance, when $s=g$,  then $Q_{\ell+kn}=q_{\ell+kn}$,  and 

%\textcolor{magenta}{Take care in how we are defining the center of mass. See my previous comment on this matter}

\[
\begin{split}
g_m(\ell)&=a e^{it}\sum_{0\le k\le m-1} \omega^{\ell+kn} + be^{ipt}\sum_{0\le k\le m-1} \omega^{\ell p+knp}\\
&=be^{ipt} \omega^{\ell p}\sum_{0\le k\le m-1} \omega^{knp}. 
\end{split}
\]
Thus, we obtain the following new constants, 
\[
\begin{array}{cc||c|c|c}
s&Q_{\ell+kn} & s_m(\ell) & \omega^{pn}\ne 1 & \omega^{pn}= 1\\
\hline
g& q_{\ell+kn}&g_m(\ell)& 0& bm\, e^{ipt}\omega^{\ell p} \mbox{ and }|g_m(\ell)|^2=(bm)^2\\
\end{array}
\]

%\textcolor{magenta}{The values of the center of mass $g$ could change, depending of the definition of the center of mass.}

Similarly, we compute the partial sums of  the momentum of inertia, the angular momentum, and the kinetic energy.  
We summarize the results without detail calculations.
%  {\color{red}FJ: the 5-th column ($\omega^{(p-1)n}= 1$)$\downarrow$ is erased.}
\[
\begin{array}{cc||l|l}
s& Q_{\ell+kn} &s_m(\ell) &\phantom{2^{-1}}\omega^{(p-1)n}\ne 1\\%{\color{gray} \omega^{(p-1)n}= 1}\\
\hline
I& |q_{\ell+kn}|^2 &I_m(\ell)&\phantom{2^{-1}}m(a^2+b^2)\\
c& q_{\ell+kn}\times \dot{q}_{\ell+kn} &c_m(\ell)&\phantom{2^{-1}}m(a^2+pb^2)\\
K& 2^{-1}|\dot{q}_{\ell+kn}|^2&K_m(\ell)&2^{-1}m(a^2+p^2b^2)
\end{array}
\]

The following  is another type of constant. When $\omega^{(p-1)n}\ne 1$, 
\[\begin{split}
\tilde{I}_m(\ell)
&=\sum_{0\le i<j \le m-1}|q_{\ell+in}-q_{\ell+jn}|^2
%=m\sum_{0\le i<j \le m-1} |q_{kn}|^2-\left|\sum_{0\le i<j \le m-1}q_{kn}\right|^2\\
=m I_m(\ell)-|g_m(\ell)|^2\\
&=\begin{cases}
m^2(a^2+b^2)&\mbox{for } \omega^{np}\ne 1, \\
m^2 a^2&\mbox{for } \omega^{np}= 1. 
\end{cases}
\end{split}\]
Note that generally,  $\sum_{\ell=0,1,2,\dots,n-1}\tilde{I}_m(\ell)\ne \sum_{0\le i<j\le N-1}|q_i-q_j|^2$.

\end{document}